\newcommand{\K}{\mathcal{K}}
\newcommand{\R}{\mathbb{R}}
\newcommand{\RR}{\mathcal{R}}
\newcommand{\N}{\mathbb{N}}
\newcommand{\lp}{ L^p(\Omega)}
\newcommand{\wspr}{ W^{s,p}(\R^N)}
\newcommand{\wsp}{W^{s,p}(\Omega)}
\newcommand{\wwsp}{\widetilde{W}^{s,p}(\Omega)}
\newcommand{\dwsp}{ {W}^{-s,p^\prime}(\Omega)}
\newcommand{\hwsp}{ \widehat{W}^{s,p}(\Omega)}
\newtheorem{teo}{Theorem}[section]
\newtheorem{lem}[teo]{Lemma}
\newtheorem{co}[teo]{Corollary}
\theoremstyle{remark}
\newtheorem{re}[teo]{Remark}
\theoremstyle{definition}
\title[Fredholm type alternative and anti-maximum principle]{Non-resonant Fredholm alternative and anti-maximum principle for the fractional 
	$p-$Laplacian}
\author[L. M. Del Pezzo]{Leandro M. Del Pezzo}
\author[A. Quaas]{Alexander Quaas}
\address{L. M. Del Pezzo
\hfill\break\indent CONICET and Departamento  de Matem{\'a}tica, 
FCEyN, Universidad de Buenos Aires,
\hfill\break\indent Pabellon I, Ciudad Universitaria (1428),
Buenos Aires, Argentina.}
\email{ldpezzo@dm.uba.ar}
\address{A. Quaas
\hfill\break\indent Departamento de Matem\'atica, Universidad 
T\'ecnica Federico Santa Mar\'ia Casilla V-110, Avda. 
Espa\~na, 1680 -- Valpara\'iso, CHILE.}
\email{alexander.quaas@usm.cl}
\subjclass[2010]{}
\keywords{}
\begin{document}

\begin{abstract}
	In this paper we extend two nowadays classical results to a nonlinear 
	Dirichlet problem to equations involving the fractional 
	$p-$Laplacian.  The first result is an existence in a 
	non-resonant range more specific between the first
	and second eigenvalue of the fractional $p-$Laplacian. The second 
	result is the anti-maximum principle 
	for the fractional $p-$Laplacian. 
\end{abstract}

\maketitle

\centerline{$\quad \quad\,$ Dedicated to Paul H. Rabinowitz}

\medskip

\noindent {\bf Mathematics Subject Classification (2010)}. 35R11, 47G20, 45G05.

\noindent {\bf Keywords.} Fractional $p$-Laplacian, existence results, non-resonant, anti-maximum principle.

\section{Introduction}
	This paper deals with existence and qualitative results for the 
	following nonlinear Dirichlet problem with the fractional 
	$p-$Laplacian	
	\begin{equation}\label{eq:antimax3}
		\begin{cases}
			(-\Delta_p)^s u=\lambda |u|^{p-2}u + f(x)  
			&\text{in }\Omega,\\
			u=0 &\text{in }\Omega^c\coloneqq\mathbb{R}^N\setminus\Omega.
		\end{cases}
	\end{equation}
	Here and in the rest of this introduction, 
	$\Omega$  is a smooth bounded open of $\R^N,$ $s\in(0,1),$ 
	and $p\in(1,\infty).$
	The fractional $p-$Laplacian is a nonlocal version of the 
	$p-$Laplacian and is an extension of the fractional Laplacian 
	($p= 2$). More precisely,  the fractional $p-$Laplacian is define as
	\begin{equation}\label{eq:opfint}
		(-\Delta_p)^s u(x)=2\K\mbox{ P.V.}\int_{\R^N}
		\frac{|u(x)-u(y)|^{p-2}(u(x)-u(y))}{|x-y|^{N+sp}}\, dy,
	\end{equation}
	with
	\[
		\K=p(1-s)\left(\int_{S^{N-1}}|\langle\omega,e\rangle|^p 
		d\mathcal{H}^{N-1}(\omega)\right)^{-1},  \quad 
		e\in S^{N-1}.
	\] 
	where $S^{N-1}$ denotes the unit sphere in $\mathbb{R}^{N}$
	and $\mathcal{H}^{N-1}$ denotes the $N-1$--dimensional Hausdorff
	measure. For more details, see \cite{BBM,MR3411543}.	
	
	\medskip

	A pioneer work on existence of nonlinear one dimensional integral 
	equation (with $L^2$ kernels) under non-resonant case can be found 
	in  \cite{Dolph}. Beside that let as recall that the Fredholm 
	alternative fails for $p-$Laplacian and the situation is more much 
	complex that in the linear case. This can be found in 
	a large number of results around  Fredholm type 
	alternative for the $p-$Laplacian, see for instance 
	\cite{AO,MR1669705,MR2331056,dra1,MR1841260,MR1646309,
	MR1881394,MR1896161,MR2264042} 
	and the references therein.
	
	\medskip

	For the fractional Laplacian, the standard Fredholm 	
	alternative for compact operator can be applied. Observe that the 
	spectrum  for the fractional Laplacian 
	is studied in \cite{MR3233760,MR3002745}.
	
	\bigskip
	
	Let star by describing  our existence results. 
	Denote by $\lambda_1(s,p)$ and $\lambda_2(s,p)$ the first
	and second eigenvalues respectively
	for the fractional $p-$Laplacian with Dirichlet 
	boundary condition. See Section \ref{Preli} 
	for the definition and basic properties 
	of the eigenvalues of the fractional $p-$Laplacian.
	
	\medskip
	
	First, by standard 
	minimization argument, we show
	that  if $\lambda < \lambda_1(s,p),$ then there is a
	unique weak solution of \eqref{eq:antimax3},
	see Section \ref{FA}. Then, also in Section \ref{FA}, we  show the 
	existence of solution to \eqref{eq:opfint} for 
	$\lambda \in (\lambda_1(s,p), \lambda_2(s,p))$ and 
	$f\in \dwsp$. This existence part relies in an 
	homotopy deformation of the degree as in \cite{MR1861112}, see also 
	\cite{AGo,Bo,MR2604105}.	
	
	More precisely, we can prove the following Theorem.
	\begin{teo}\label{teo:fred2} 
		Let $f\in \dwsp.$
		If $\lambda_1(s,p)<\lambda<\lambda_2(s,p)$ then
		there is a  weak solution 
		of \eqref{eq:antimax3}. 	  				
	\end{teo}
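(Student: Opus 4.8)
I would prove Theorem \ref{teo:fred2} via a topological-degree argument, following the homotopy-deformation strategy indicated in the introduction. The natural functional-analytic setting is the space $\wwsp$ (the closure of $C_c^\infty(\Omega)$ in the Gagliardo norm), on which $(-\Delta_p)^s$ acts as a continuous, bounded, invertible operator onto the dual $\dwsp$; write $A\colon \wwsp\to\dwsp$ for this operator. Since the embedding $\wwsp\hookrightarrow L^p(\Omega)$ is compact, the map $u\mapsto A^{-1}\big(\lambda|u|^{p-2}u+f\big)$ is a compact perturbation of the identity on $\wwsp$, so the Leray--Schauder degree is well defined for homotopies that admit a uniform a priori bound. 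The solutions of \eqref{eq:antimax3} are exactly the fixed points of this map, equivalently the zeros of $\Phi_\lambda(u)\coloneqq A(u)-\lambda|u|^{p-2}u-f$.

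**Key steps.** First, I would establish the a priori bound: there is $R>0$ such that every solution $u$ of \eqref{eq:antimax3} for the given $\lambda\in(\lambda_1(s,p),\lambda_2(s,p))$ satisfies $\|u\|_{\wwsp}\le R$. This is done by contradiction and rescaling: if $u_k$ are solutions with $t_k\coloneqq\|u_k\|_{\wwsp}\to\infty$, set $v_k=u_k/t_k$; then $v_k$ solves $A(v_k)=\lambda|v_k|^{p-2}v_k+t_k^{1-p}f$, and after passing to a weakly convergent subsequence $v_k\rightharpoonup v$ (strongly in $L^p$ by compactness, and in fact strongly in $\wwsp$ by the standard $(S_+)$ property of $A$), the limit $v$ is a nontrivial weak solution of the eigenvalue equation $A(v)=\lambda|v|^{p-2}v$. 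Since $\lambda$ lies strictly between the first and second eigenvalues, this contradicts the characterization of the spectrum recalled in Section \ref{Preli} (there is no eigenvalue in the open interval $(\lambda_1(s,p),\lambda_2(s,p))$). Second, with $R$ fixed, I would compute $\deg\big(\Phi_\lambda, B_R, 0\big)$ by deforming $\lambda$ down toward $\lambda_1(s,p)$ and $f$ toward a suitable multiple of the first eigenfunction, or alternatively by the homotopy $H(t,u)=A(u)-\lambda|u|^{p-2}u-tf$ together with a description of the degree of $A(\cdot)-\lambda|\cdot|^{p-2}\cdot$; the point is that for $\lambda$ in the non-resonant band the relevant degree is nonzero (it differs from the degree one gets below $\lambda_1$, reflecting the change of the degree as $\lambda$ crosses the simple isolated eigenvalue $\lambda_1(s,p)$). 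Nonvanishing of the degree yields a zero of $\Phi_\lambda$ in $B_R$, i.e. a weak solution.

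**Main obstacle.** The delicate point is the a priori estimate, and within it the degree computation that makes the argument conclusive. The a priori bound itself hinges on two ingredients proved in Section \ref{Preli}: the $(S_+)$-type compactness that upgrades weak to strong convergence (so that the blow-up limit $v$ really solves the eigenvalue equation rather than an inequality), and the \emph{spectral gap}, i.e. that no eigenvalue of $(-\Delta_p)^s$ lies in $(\lambda_1(s,p),\lambda_2(s,p))$ — this is exactly why $\lambda_2(s,p)$ is defined as the infimum over such ``second'' eigenvalues. The degree computation is then the genuinely technical piece: one must show that crossing the principal eigenvalue changes the degree on a small ball from $1$ to something nonzero (for the homogeneous problem the degree on large balls is $0$ when $\lambda>\lambda_1$, so the relevant nonzero contribution is concentrated near the branch bifurcating from $\lambda_1$), and then transfer this to the inhomogeneous operator $\Phi_\lambda$ via the admissible homotopy in $t$. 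I expect this degree bookkeeping — making precise the excision and using simplicity and isolation of $\lambda_1(s,p)$ — to be the heart of the proof; once the degree is shown to be nonzero on $B_R$, existence is immediate.
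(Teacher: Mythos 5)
Your overall strategy coincides with the paper's: recast \eqref{eq:antimax3} as a fixed-point problem for a completely continuous map on $\wwsp$, prove an a priori bound by normalizing a hypothetical unbounded sequence of solutions and using the compact embedding to produce in the limit a nontrivial eigenfunction with eigenvalue $\lambda\in(\lambda_1(s,p),\lambda_2(s,p))$ (impossible, since $\lambda_2(s,p)$ is the infimum of the eigenvalues above $\lambda_1(s,p)$), and then conclude by homotopy invariance of the Leray--Schauder degree along $T_t(u)=\RR_{s,p}(\lambda|u|^{p-2}u+tf)$, $t\in[0,1]$. The a priori bound part of your sketch is sound and is essentially Lemma \ref{lema:cota}.

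The gap is precisely in the step you flag as the heart: the value of the degree of the homogeneous problem, and neither of your two suggestions closes it. Deforming $\lambda$ down toward $\lambda_1(s,p)$ cannot connect to the region $\lambda<\lambda_1(s,p)$ (where the degree is $+1$), because at $\lambda=\lambda_1(s,p)$ the homogeneous equation has nontrivial solutions, the a priori bound fails, and the homotopy is inadmissible; and your assertion that ``for the homogeneous problem the degree on large balls is $0$ when $\lambda>\lambda_1$'' is false --- and if it were true it would ruin the argument, since the $t$-homotopy then transports a zero degree, from which no existence follows. The correct fact, which the paper invokes, is that for $\lambda\in(\lambda_1(s,p),\lambda_2(s,p))$ the origin is the only zero of $I-T_0$ and $d(I-T_0,B(0,R),0)=-1$; this is Theorem 5.3 of \cite{DelPezzoQuaas} (the jump of the index across the simple, isolated eigenvalue $\lambda_1(s,p)$). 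The paper also points out the elementary shortcut you are missing: $T_0(u)=\RR_{s,p}(\lambda|u|^{p-2}u)$ is odd, so Borsuk's theorem already gives that $d(I-T_0,B(0,R),0)$ is odd, hence nonzero, which is all the existence conclusion requires. Without one of these inputs (or an independent computation of that index), your argument stops exactly at the decisive point.
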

	
	Let as observe that Fredholm type alternative for fully non-linear 	
	operator can be found in \cite[Section 5]{MR2503031}. Notice that 
	using the ideas of \cite{MR2503031} and \cite{DelPezzoQuaas} a 
	different homotopy (respect to $s$) can be use to prove 
	the above Theorem.
	Beside that let also mention that from \cite{DelPezzoQuaas}  
	other existence results can be proved using bifurcation from 
	infinity for \eqref{eq:antimax3}. This results can be found for 
	the case of the $p-$Laplacian for example in \cite{AO}.   

%%%%%%%%%%%%%%%%%%%%%%%%%%%%%%%%%%%%%%%%%%%%%%%%%%%%%%%%%%%%%%%%%%%%%%%%%%

	\medskip
	
	Our second aim is to show an anti-maximum principle 
	for the fractional $p-$Laplacian. This principle has shown to be
	a powerful tool when analyzing nonlinear elliptic problems,
	see \cite{MR1865948,MR550042,MR1376860,MR1340547} 
	and the references therein.
	For the $p-$Laplacian operator, 
	the anti-maximum principle is proven in 
	\cite{MR1354715}, see also \cite{MR1811956,MR1942756}. 
	On the other hand, the link between bifurcation 
	theory and anti-maximum principle 
	was observed by the first time in 
	\cite{MR1865948}  (see for instance 
	\cite[Theorem 27]{MR1865948} 
	for a improvement of the the anti-maximum principle 
	for the $p$-Laplacian operator).  
	
	\medskip

	In Section \ref{anti}, before proving our anti-maximum principle 
	we show the following maximum principle.
	
	\begin{teo}\label{teo:max1}
		Let  $f\in \dwsp$ be such that  $f\not\equiv0.$ 
		\begin{enumerate}
			\item If $f\ge0$ and $\lambda<\lambda_1(s,p),$ 
			then $u>0$ a.e. in $\Omega$ for any super-solution 
			$u$ of \eqref{eq:antimax3}.
		\item 	If $f\le0,$ and $\lambda<\lambda_1(s,p),$ 
			then $u<0$ a.e. in $\Omega$	for any 
			sub-solution $u$ of \eqref{eq:antimax3}.
		\end{enumerate}
	\end{teo}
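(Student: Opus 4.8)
The plan is to deduce part (2) from part (1), and to prove (1) in two stages: first the \emph{weak} conclusion $u\ge0$ a.e.\ in $\R^N$, obtained from an energy argument combined with the variational characterization of $\lambda_1(s,p)$, and then \emph{strict} positivity, obtained from a strong maximum principle for the fractional $p$-Laplacian. For the reduction, note that $(-\Delta_p)^s$ and $t\mapsto|t|^{p-2}t$ are odd, so if $u$ is a sub-solution of \eqref{eq:antimax3} with datum $f\le0$ then $-u$ is a super-solution with datum $-f\ge0$; hence (2) is exactly (1) applied to the pair $(-u,-f)$, and it suffices to prove (1).

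\textbf{Step 1 (weak positivity).} Let $u$ be a weak super-solution of \eqref{eq:antimax3}, so that
\[
\langle(-\Delta_p)^s u,\varphi\rangle\ \ge\ \lambda\int_\Omega|u|^{p-2}u\,\varphi\,dx+\langle f,\varphi\rangle\qquad\text{for every }\varphi\in\wwsp\text{ with }\varphi\ge0 .
\]
I would test this with $\varphi=u^-\coloneqq\max\{-u,0\}$, which is admissible since $u^-\ge0$ and $u^-$ is supported in $\overline\Omega$. Using the elementary pointwise inequality
\[
|a-b|^{p-2}(a-b)\bigl((-a)^+-(-b)^+\bigr)\ \le\ -\,|(-a)^+-(-b)^+|^{p}\qquad(a,b\in\R)
\]
inside the double integral defining the form, one gets $\langle(-\Delta_p)^s u,u^-\rangle\le-\langle(-\Delta_p)^s u^-,u^-\rangle$; on the right-hand side $\int_\Omega|u|^{p-2}u\,u^-\,dx=-\int_\Omega(u^-)^p\,dx$ and $\langle f,u^-\rangle\ge0$ because $f\ge0$. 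Therefore $\langle(-\Delta_p)^s u^-,u^-\rangle\le\lambda\int_\Omega(u^-)^p\,dx$, and the variational characterization of $\lambda_1(s,p)$ (namely $\langle(-\Delta_p)^s v,v\rangle\ge\lambda_1(s,p)\int_\Omega|v|^p\,dx$ for $v\in\wwsp$) yields $(\lambda_1(s,p)-\lambda)\int_\Omega(u^-)^p\,dx\le0$. Since $\lambda<\lambda_1(s,p)$ this forces $u^-\equiv0$, i.e.\ $u\ge0$ a.e.\ in $\R^N$.

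\textbf{Step 2 (strict positivity).} First $u\not\equiv0$: if $u\equiv0$, the super-solution inequality would give $\langle f,\varphi\rangle\le0$ for all $\varphi\ge0$, contradicting $f\ge0$, $f\not\equiv0$. Now $u\ge0$, $u\not\equiv0$, and in the weak sense in $\Omega$ we have
\[
(-\Delta_p)^s u-\lambda\,|u|^{p-2}u\ \ge\ f\ \ge\ 0 .
\]
When $\lambda\ge0$ this already gives $(-\Delta_p)^s u\ge0$ (as $u\ge0$); when $\lambda<0$ it reads $(-\Delta_p)^s u+|\lambda|u^{p-1}\ge0$, with a nonnegative, constant zeroth-order coefficient. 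In either case the strong maximum principle for the fractional $p$-Laplacian applies to the nonnegative, nontrivial super-solution $u$ and gives $u>0$ a.e.\ in $\Omega$, as claimed.

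The pointwise inequality, the admissibility of $u^-$, and the Rayleigh-quotient bound for $\lambda_1(s,p)$ are routine, so I expect the only real obstacle to be Step 2: one needs a strong maximum principle valid for \emph{weak} super-solutions of the fractional $p$-Laplacian carrying a bounded zeroth-order term — equivalently, the dichotomy that a nonnegative weak super-solution is either identically $0$ in $\R^N$ or bounded below by a positive constant on every compact subset of $\Omega$. This nonlocal Hopf-type/strong minimum principle is the key input, to be quoted from the literature or recorded in the preliminary section and then applied here.
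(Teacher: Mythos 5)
Your proposal is correct and follows essentially the same route as the paper: testing the super-solution inequality with $u_-$ (admissible because $u\ge0$ a.e.\ in $\Omega^c$) and comparing with the Rayleigh quotient for $\lambda_1(s,p)$ to get $u\ge0$, then deducing $u\not\equiv0$ from $f\ge0$, $f\not\equiv0$ and concluding strict positivity from an external nonlocal strong minimum principle. The paper does exactly this, quoting \cite[Theorem 2.9]{DPBL} for the last step (together with the connected-component argument of Lemma \ref{lema:mprinciple}), which is precisely the ``key input'' you flagged as needing to be cited.
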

	
	\medskip
	
	Thus, we show the following 
	anti-maximum principle. 
	
	\begin{teo}\label{teo:anti1}
		Let  $f\in \dwsp$ be such that $f\not\equiv0.$
		Then there is $\delta=\delta(f)>0$ such that
		\begin{enumerate}
			\item if $f\ge0$ and 
				$\lambda\in(\lambda_1(s,p),\lambda_1(s,p)+\delta)$ 
				then  any weak solution $u$ of 
				\eqref{eq:antimax3} satisfies $u<0$ a.e. in $\Omega.$
			\item if $f\le0$ and 
				$\lambda\in(\lambda_1(s,p),\lambda_1(s,p)+\delta)$ 
				then  any weak solution $u$ of 
				\eqref{eq:antimax3} satisfies $u>0$ a.e. in $\Omega.$
							
		\end{enumerate}		
	\end{teo}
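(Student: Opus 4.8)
Following the classical strategy for the $p$-Laplacian (cf.\ \cite{MR1354715}), I would prove part (1) (the case $f\ge0$) by contradiction together with a blow-up argument; part (2) then follows at once by applying (1) with $f$ replaced by $-f$ and $u$ by $-u$, since $(-\Delta_p)^s$ is odd and positively $(p-1)$-homogeneous. So suppose (1) fails: there are $\lambda_n\downarrow\lambda_1(s,p)$ and, for the fixed $f\ge0$ with $f\not\equiv0$, weak solutions $u_n$ of \eqref{eq:antimax3} with parameter $\lambda_n$ such that $|\{x\in\Omega:u_n(x)\ge0\}|>0$ for every $n$.

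\emph{Step 1: $t_n:=\|u_n\|_{\wwsp}\to\infty$.} If some subsequence of $(u_n)$ is bounded in $\wwsp$, then $u_n\rightharpoonup u_0$ in $\wwsp$ and $u_n\to u_0$ in $L^p(\Omega)$ by compactness; since then $\lambda_n|u_n|^{p-2}u_n+f\to\lambda_1(s,p)|u_0|^{p-2}u_0+f$ in $\dwsp$ and $(-\Delta_p)^s$ has the $(S_+)$ property, $u_n\to u_0$ strongly in $\wwsp$, and $u_0$ solves the resonant problem $(-\Delta_p)^su_0=\lambda_1(s,p)|u_0|^{p-2}u_0+f$. I would rule this out. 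Testing with $u_0^-$, using the pointwise inequality $|a-b|^{p-2}(a-b)(a^--b^-)\le-|a^--b^-|^p$, the variational characterisation $[w]_{s,p}^p\ge\lambda_1(s,p)\|w\|_{L^p(\Omega)}^p$, and the strict positivity of the kernel $|x-y|^{-N-sp}$, one is left with three possibilities: $u_0\equiv0$, or $u_0>0$ in $\Omega$ (after the strong maximum principle, cf.\ Theorem~\ref{teo:max1}), or $u_0=-c\,\varphi_1$ for some $c>0$, where $\varphi_1>0$ is the first eigenfunction. The first and third force $f\equiv0$ by inspection of the equation; in the second, the Picone inequality for the fractional $p$-Laplacian applied to $\varphi_1$ and $u_0$ gives $\langle(-\Delta_p)^su_0,\varphi_1^p/u_0^{p-1}\rangle\le[\varphi_1]_{s,p}^p=\lambda_1(s,p)\|\varphi_1\|_{L^p(\Omega)}^p$, while testing the equation against $\varphi_1^p/u_0^{p-1}\ge0$ produces the value $\lambda_1(s,p)\|\varphi_1\|_{L^p(\Omega)}^p+\int_\Omega f\varphi_1^p u_0^{1-p}\,dx>\lambda_1(s,p)\|\varphi_1\|_{L^p(\Omega)}^p$ because $f\not\equiv0$ --- a contradiction. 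Hence $f\equiv0$ in all cases, against $f\not\equiv0$, so $t_n\to\infty$.

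\emph{Step 2: rescaling and conclusion.} Put $v_n:=u_n/t_n$, so $\|v_n\|_{\wwsp}=1$ and, by homogeneity, $(-\Delta_p)^sv_n=\lambda_n|v_n|^{p-2}v_n+t_n^{1-p}f$ with $t_n^{1-p}f\to0$ in $\dwsp$. The same compactness/$(S_+)$ argument gives, along a subsequence, $v_n\to v_0$ strongly in $\wwsp$ with $\|v_0\|_{\wwsp}=1$ and $(-\Delta_p)^sv_0=\lambda_1(s,p)|v_0|^{p-2}v_0$; thus $v_0$ is a first eigenfunction and, by the simplicity of $\lambda_1(s,p)$, we may normalise $\varphi_1$ so that $v_0=\pm\varphi_1$. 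I would then upgrade $v_n\to v_0$ to convergence in $C(\overline\Omega)$ via the global H\"older regularity for the fractional $p$-Laplacian, and combine it with the Hopf-type bound $\varphi_1\ge c_0\,\dist(\cdot,\partial\Omega)^s$ in $\Omega$ for some $c_0>0$; this forces $v_n$, hence $u_n=t_nv_n$, to have a constant sign in $\Omega$ for all large $n$. If $u_n<0$ a.e.\ in $\Omega$, this contradicts $|\{u_n\ge0\}|>0$. If $u_n>0$ in $\Omega$, then, since $f\ge0$, $u_n$ is a positive super-solution of $(-\Delta_p)^sw=\lambda_n|w|^{p-2}w$, so testing with $\varphi_1^p/u_n^{p-1}\ge0$ and using Picone gives $\lambda_n\|\varphi_1\|_{L^p(\Omega)}^p\le\langle(-\Delta_p)^su_n,\varphi_1^p/u_n^{p-1}\rangle\le[\varphi_1]_{s,p}^p=\lambda_1(s,p)\|\varphi_1\|_{L^p(\Omega)}^p$, i.e.\ $\lambda_n\le\lambda_1(s,p)$, impossible. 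Either way we reach a contradiction, which proves (1), and with it the theorem.

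The step I expect to be the main obstacle is the upgrade from strong $\wwsp$-convergence of the rescaled solutions $v_n$ to uniform convergence on $\overline\Omega$ --- which is precisely what makes the sign of $u_n$ visible for $\lambda$ close to $\lambda_1(s,p)$ --- since it requires the global (up-to-the-boundary) H\"older regularity theory for the fractional $p$-Laplacian together with the sharp boundary behaviour of $\varphi_1$ provided by a Hopf-type lemma. A secondary, more routine, point is to verify that $u_0^-$ and $\varphi_1^p/u^{p-1}$ are admissible test functions in the weak formulations and that the Picone inequality is applicable in this nonlocal nonlinear setting.
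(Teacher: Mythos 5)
Your Step~1 is essentially the paper's own first step (ruling out a bounded subsequence by showing the resonant problem $\lambda=\lambda_1(s,p)$ has no solution, which the paper does via Picone in Lemma~\ref{lema:noexiste1} and Corollary~\ref{co:noexiste1}), and the reduction of part (2) to part (1) by oddness is also how the paper argues. The genuine gap is in Step~2, at exactly the point you flagged. From $v_n\to\pm\varphi_1$ in $C(\overline\Omega)$ together with $\varphi_1\ge c_0\,\dist(\cdot,\partial\Omega)^s$ you cannot conclude that $v_n$ has constant sign: near $\partial\Omega$ the lower bound $c_0\delta^s$ tends to zero, so a uniform error $\varepsilon_n=\|v_n\mp\varphi_1\|_{C(\overline\Omega)}$, however small, dominates $\varphi_1$ on the boundary strip where $c_0\delta^s\le\varepsilon_n$, and the sign of $v_n$ there remains undetermined. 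The classical local proof needs $C^1$ convergence plus $\partial_\nu\varphi_1<0$; the correct nonlocal substitute is uniform convergence of the quotients $v_n/\delta^s$ together with $\varphi_1/\delta^s\ge c_0>0$, i.e.\ fine boundary regularity. In this paper that quotient regularity (second part of Theorem~\ref{the:reg}, from Ros-Oton--Serra) is available only for $p=2$, and it is precisely the ingredient of Lemma~\ref{lema:cono} and Theorem~\ref{teo:anti2}, which is why the pointwise statement is confined to the linear case. A second obstruction: your regularity upgrade needs $f\in L^\infty(\Omega)$ (both Lemma~\ref{lema:Linfinito} and Theorem~\ref{the:reg} assume it), whereas Theorem~\ref{teo:anti1} only assumes $f\in\dwsp$, so even the $C(\overline\Omega)$ convergence of $v_n$ is not justified under the stated hypotheses.

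The paper circumvents this -- and this is the main point of its proof -- by noting that the conclusion is only ``a.e.\ in $\Omega$'', so no boundary information is needed at all. In place of Hopf it proves Lemma~\ref{lema:medcotinf1} (and Remark~\ref{re:medcotinf2}): for $\lambda\ge\lambda_1(s,p)$, $f\ge0$, $f\not\equiv0$, every weak super-solution satisfies $|\{u<0\}|\ge (C/\lambda)^{\alpha}$, a purely variational estimate obtained by testing with $u_-$ and using the fractional Sobolev embedding and H\"older's inequality. Combined with the $L^q$-convergence of the rescaled solutions to a signed first eigenfunction, this uniform lower bound on the measure of the negative set excludes the limit $v>0$, and an analogous Sobolev/H\"older estimate on $(u_n)_+$ excludes $v<0$; no uniform convergence, no regularity theory and no Hopf lemma enter. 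To repair your argument you would either have to restrict to $p=2$ (where it essentially reproduces the proof of Theorem~\ref{teo:anti2}) or replace your boundary-sign step by a quantitative measure estimate of this type.
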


	Let's comment that,  for the spectral fractional
	Laplacian (this is a different operator than $(-\Delta)^s$), 
	the anti-maximum principle is only proved in 
	the case $s=\nicefrac12$, see \cite{MR3012856}.  
	In fact, we would like to mention that the proof in
	\cite{MR3012856} can be easily extended to the case $s\in(0,1).$
	See also \cite{MR2566516} where the
	anti-maximum principle is shown for non-singular kernel. 
	So, as far we know, Theorem \ref{teo:anti1} is new even 
	for the case $p= 2$. Therefore, we extent 
	in particular the now classical anti-maximum principle 
	of Clement and Peletier (see \cite{MR550042}) 
	for all the range $s\in (0,1)$ and  $p\in(1,\infty)$.
	
	\medskip
	
	We want to observe that, our proof of the previous theorem is 
	not a straightforward adaptation of the proof given in the local 
	case 
	due to we do not have a suitable Hopf's lemma for the fractional 
	$p-$Laplacian. To overcome this problem we will use Picone's identity 
	(see Lemma \ref{lem:laux1}) and  show a lower bound for the measures 
	of the negative (positive) sets of the weak super(sub)-solutions of
	\eqref{eq:antimax3} (see Lemma \ref{lema:medcotinf1} and 
	Remark \ref{re:medcotinf2} below).
	
	\medskip
	
	In the linear case (p=2), thanks to the regularity results up to the 
	boundary  and the Hopf lemma, 
	we can prove a more general result improving Theorems
	\ref{teo:max1} and \ref{teo:anti1}.
	
	\begin{teo}\label{teo:anti2}
		Let $\Omega$ be a bounded domain with $C^{1,1}$
		boundary, 
		$w_1$ be a positive eigenfunction of $(-\Delta)^s$
		associated to $\lambda_1(s,2).$ For any		
		$f\in L^{\infty}(\Omega)$ with 
		$\displaystyle \int_\Omega f(x)w_1 dx\not=0,$ there is 
		$\delta=\delta(f)>0$ such that 
		\begin{enumerate}
			\item if $\displaystyle \int_{\Omega}f(x)w_1dx>0$ 
				then any weak solution $u$ of \eqref{eq:antimax3}
				satisfies
				\begin{enumerate}
					\item $u<0$ in $\Omega$ if 
						$\lambda\in(\lambda_1(s,2),
						\lambda_1(s,2)+\delta);$ 
				\item $u>0$ in $\Omega$ if 
						$\lambda\in(\lambda_1(s,2)-
						\delta,\lambda_1(s,2));$ 
				\end{enumerate}	
			\item if $\displaystyle \int_{\Omega}f(x)w_1dx<0$ 
				then any weak solution $u$ of \eqref{eq:antimax3}
				satisfies
				\begin{enumerate}
					\item $u>0$ in $\Omega$ if 
						$\lambda\in(\lambda_1(s,2),
						\lambda_1(s,2)+\delta);$ 
				\item $u<0$ in $\Omega$ if 
						$\lambda\in
						(\lambda_1(s,2)-\delta,\lambda_1(s,2)).$ 
				\end{enumerate}
						
		\end{enumerate}		
	\end{teo}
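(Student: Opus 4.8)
The key simplification when $p=2$ is that $(-\Delta)^s$ is linear, so the classical Clément–Peletier strategy via the Lyapunov–Schmidt decomposition along the first eigenfunction becomes available, and we additionally have the global regularity theory (Ros-Oton–Serra) and the fractional Hopf lemma: a function $v$ with $(-\Delta)^s v \ge 0$ in $\Omega$, $v=0$ in $\Omega^c$, $v\not\equiv 0$, satisfies $v/d^s \ge c > 0$ up to the boundary, where $d(x)=\dist(x,\Omega^c)$. I would begin by proving the statement in the regime $\lambda \in (\lambda_1-\delta,\lambda_1)$ where $u>0$ (item (1b)): this is just the maximum principle of Theorem \ref{teo:max1} together with $C^{1,1}$-boundary regularity to upgrade "$u>0$ a.e." to "$u/d^s>0$ in $\bar\Omega$", which will be needed for the contradiction argument below. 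The genuinely new content is the sign reversal just above $\lambda_1$.

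**Core argument by contradiction.**

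Fix $f\in L^\infty(\Omega)$ with $\int_\Omega f w_1\,dx>0$ (the other sign is symmetric, replacing $u$ by $-u$ and $f$ by $-f$). Suppose the conclusion fails: there are $\lambda_k \downarrow \lambda_1(s,2)$ and weak solutions $u_k$ of \eqref{eq:antimax3} with $\lambda=\lambda_k$ that are \emph{not} negative throughout $\Omega$. First I would show $\|u_k\|_{\widetilde W^{s,2}(\Omega)}\to\infty$: if a subsequence stayed bounded, weak compactness plus the compact embedding would give a limit $u_\infty$ solving $(-\Delta)^s u_\infty=\lambda_1 u_\infty+f$; testing against $w_1$ and using that $\lambda_1$ is simple forces $\int_\Omega f w_1\,dx=0$, a contradiction. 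Hence set $v_k:=u_k/\|u_k\|$; then $v_k$ solves $(-\Delta)^s v_k=\lambda_k v_k + f/\|u_k\|$, the right-hand side converges strongly, and by the same compactness $v_k\to v$ strongly with $\|v\|=1$ and $(-\Delta)^s v=\lambda_1 v$, so $v=\pm w_1/\|w_1\|$. Testing the equation for $u_k$ against $w_1$ gives $(\lambda_k-\lambda_1)\int_\Omega u_k w_1\,dx = -\int_\Omega f w_1\,dx<0$, and since $\lambda_k-\lambda_1>0$ we get $\int_\Omega u_k w_1\,dx<0$ for all $k$; dividing by $\|u_k\|>0$ and passing to the limit yields $\int_\Omega v\, w_1\,dx\le 0$, forcing $v=-w_1/\|w_1\|$. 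Thus $u_k/\|u_k\| \to -w_1/\|w_1\|$ strongly in $\widetilde W^{s,2}(\Omega)$.

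**Passing to uniform pointwise convergence — the main obstacle.**

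This is where the fractional regularity theory does the heavy lifting, and where I expect the technical difficulty to concentrate. Write $u_k = -\|u_k\| \,(w_1/\|w_1\|) + r_k$ with $\|r_k\|/\|u_k\|\to 0$. The function $w_k := -u_k/\|u_k\|$ solves $(-\Delta)^s w_k = \lambda_k w_k - f/\|u_k\|$ with $w_k=0$ in $\Omega^c$; the right-hand side is bounded in $L^\infty(\Omega)$ uniformly in $k$, so by the Ros-Oton–Serra boundary regularity estimate $\|w_k/d^s\|_{C^\alpha(\bar\Omega)}$ is bounded, hence $w_k\to w_1/\|w_1\|$ in the $C^0(\bar\Omega/d^s)$-sense, i.e. $u_k/(\|u_k\| d^s) \to -w_1/(\|w_1\| d^s)$ uniformly on $\bar\Omega$. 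By the Hopf lemma $w_1/d^s \ge c_0 > 0$ on $\bar\Omega$, so for $k$ large $u_k/(\|u_k\| d^s) \le -c_0/2 < 0$ on $\bar\Omega$, which gives $u_k < 0$ throughout $\Omega$ — contradicting the choice of $u_k$. This proves item (1a); item (1b) was handled first, and part (2) follows by the substitution $u\mapsto -u$, $f\mapsto -f$. The number $\delta=\delta(f)$ is extracted as the supremum of the $\lambda-\lambda_1$ for which the argument runs without contradiction; the only places $f$ enters quantitatively are the lower bound on $|\int_\Omega f w_1\,dx|$ and the $L^\infty$-norm controlling the regularity constants, so $\delta$ genuinely depends only on $f$ (through these two quantities) as claimed.
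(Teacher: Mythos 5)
Your core contradiction argument for item (1a) is essentially the paper's proof: blow-up of a sequence of non-negative-somewhere solutions as $\lambda_k\downarrow\lambda_1(s,2)$, identification of the normalized limit as $-w_1$ by testing against $w_1$ (this is where $\int_\Omega f w_1\,dx>0$ enters), and then the Ros-Oton--Serra boundary regularity plus the fractional Hopf lemma (the paper packages this as Theorem \ref{the:reg}, Arzel\`a--Ascoli and Lemma \ref{lema:cono}) to convert $v_k\to -w_1$ into a uniform sign on $\overline\Omega$. However, your treatment of item (1b) is wrong: you claim it ``is just the maximum principle of Theorem \ref{teo:max1}'', but that theorem requires $f\ge 0$, whereas Theorem \ref{teo:anti2} only assumes $\int_\Omega f w_1\,dx>0$ and $f$ may change sign. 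So the maximum principle is simply not applicable in the regime $\lambda\in(\lambda_1(s,2)-\delta,\lambda_1(s,2))$, and (1b) is not a soft consequence of earlier results; it must be proved by the \emph{same} limiting argument with $\lambda_k\nearrow\lambda_1(s,2)$ (then $(\lambda_1-\lambda_k)\int_\Omega u_k w_1\,dx=\int_\Omega f w_1\,dx>0$ with $\lambda_1-\lambda_k>0$ forces the limit to be $+w_1$, and the same regularity/Hopf step yields $u_k>0$). This is exactly what the paper means by ``the other statements can be proved in an analogous way''; your proposal as written leaves (1b) unproved.

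A second, smaller gap: you normalize by the $\widetilde{W}^{s,2}(\Omega)$ norm and then assert that the right-hand side $\lambda_k w_k - f/\|u_k\|$ is uniformly bounded in $L^\infty(\Omega)$, but that right-hand side contains $w_k$ itself, and a $\widetilde{W}^{s,2}$ bound does not give an $L^\infty$ bound (unless $2s>N$). You need a uniform $L^\infty$ estimate for the normalized sequence before Theorem \ref{the:reg} can be applied; the paper avoids this circularity by first invoking Lemma \ref{lema:Linfinito} and normalizing by $\|u_n\|_{L^\infty(\Omega)}$, so that $\|v_n\|_{L^\infty(\Omega)}=1$ by construction and the data of the equation for $v_n$ are uniformly bounded. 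Your argument can be repaired either by switching to the $L^\infty$ normalization or by inserting a uniform De Giorgi/Moser-type $L^\infty$ bound, but as stated this step is not justified.
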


%%%%%%%%%%%%%%%%%%%%%%%%%%%%%%%%%%%%%%%%%%%%%%%%%%%%%%%%%%%%%%%%%%%%%%%%
	\medskip
	
	The paper is organize as follows. 
	In Section 2 we review some 
	preliminaries including the eigenvalue problems. In 
	Section 3 we prove our existence results.
	Finally, in Section 4 we prove Theorems \ref{teo:max1},
	\ref{teo:anti1} and \ref{teo:anti2}.

%%%%%%%%%%%%%%%%%%%%%%%%%%%%%%%%%%%%%%%%%%%%%%
\section{Preliminaries}\label{Preli}
\setcounter{equation}{0}
%%%%%%%%%%%%%%%%%%%%%%%%%%%%%%%%%%%%%%%%%%%%%%
	Let's start by introducing the notation and definitions 
	that we will use in this work. 
	We also gather some preliminaries properties which 
	will be useful in the forthcoming 
	sections.
	
	\medskip
	
	Here and hereafter, $s\in(0,1),$ $p\in(1,\infty)$ and 
	we will denote by $\Omega$ an  
	open set in $\R^N.$ Given a subset $A$ of $\R^N$ 
	we set $A^c=\R^N\setminus A,$
	and $A^2=A\times A.$  For all function 
	$u\colon \Omega\to\R$ we define 
	\[
		u_+(x)\coloneqq\max\{u(x),0\} \quad\mbox{ and } \quad
		u_-(x)\coloneqq\max\{-u(x),0\},
	\]
	\[
		\Omega_+\coloneqq\{x\in\Omega\colon u(x)>0\}
		\quad\mbox{ and }\quad
		\Omega_-\coloneqq\{x\in\Omega\colon u(x)<0\}.
	\]

	\subsection{Fractional Sobolev spaces.}
	
	$\mbox{}$\vspace{3mm}
	  
		The fractional Sobolev spaces $\wsp$
		is defined to be the set of functions $u\in\lp$ such that
		\[
			|u|_{\scriptstyle\wsp}^p\coloneqq
			\int_{\Omega^2}
			\dfrac{|u(x)-u(y)|^p}{|x-y|^{N+sp}}\, dxdy<\infty.
		\]
		The fractional Sobolev spaces admit the following norm
		\[
			\|u\|_{\scriptstyle\wsp}
			\coloneqq\left(\|u\|_{\scriptstyle\lp}^p+|u|_{\wsp}^p
			\right)^{\frac1p},
		\]
		where
		\[
			\|u\|_{\scriptstyle\lp}^p\coloneqq\int_\Omega |u(x)|^p\, dx.
		\]
		The space $\wspr$ is defined similarly.
	
		\medskip
	
		We will denote by $\wwsp$ the space of all 
		$u\in\wsp$ such that $\tilde{u}\in\wspr,$ 
		where $\tilde{u}$ is the extension by 
		zero of $u.$ The dual space of $\wwsp$ is 
		denoted by $\dwsp$ and the 
		corresponding dual pairing is denoted by 
		$\langle \cdot , \cdot \rangle.$
		
		\begin{re}
			By \cite[Lemma 6.1]{DNPV}, if $\Omega$ 
			is bounded then there is a suitable constant 
			$C=C(N,s,p)>0$ such that for any 
			$u\in\wwsp$ we get 
			\begin{align*}
				|u|_{\scriptstyle\wspr}^p&\ge
				\int_{\Omega\times \Omega^c}
				\dfrac{|u(x)|^p}{|x-y|^{N+sp}}dxdy
				=\int_{\Omega}|u(x)|^p \int_{\Omega^c}\dfrac{1}
				{|x-y|^{N+sp}}dydx\\
				&\ge \dfrac{C}{|\Omega|^{\nicefrac{sp}N}}\|u\|_{\scriptstyle\lp}^p,
			\end{align*}
			where $|\Omega|$ denotes the Lebesgue measure of $\Omega.$ 
			Hence, the seminorm $|\cdot|_{\scriptstyle\wspr}$ is a norm in $\wwsp$ 
			equivalent to the standard norm.
		\end{re}
	
		If $\Omega$ is bounded, we set
		\[
			\hwsp\coloneqq\left\{u\in L^{p}_{loc}(\R^N)
			\colon \exists U\supset\supset\Omega \mbox{ s.t. }
			u\in W^{s,p}(U), [u]_{s,p}<\infty\right\},
		\]
		where
		\[
			[u]_{s,p}\coloneqq\int_{\R^N}\dfrac{|u(x)|^{p-1}}
			{(1+|x|)^{N+sp}}dx.
		\]
		Observe that $\wwsp\subset\hwsp.$
	
		\medskip
	
		We will denote by $p_s^\star$ 
		the fractional critical 
		Sobolev exponent, that is
		\[
			p_s^\star\coloneqq
			\begin{cases}
				 \dfrac{Np}{N-sp} &\text{ if } sp<N,\\
				 +\infty  &\text{ if } sp\ge N.\\
			\end{cases}
		\]
		
		\begin{re} 
			If $\mathcal{X}=\wsp$ or $\wwsp$ or $\hwsp$  
			and $u\in\mathcal{X}$  then $u_{+},u_{-}\in\mathcal{X}$
			owing to 
			\[
				|u_{-}(x)-u_{-}(y)|\le |u(x)-u(y)|
				\quad\mbox{ and }\quad
				|u_{+}(x)-u_{+}(y)|\le |u(x)-u(y)|,
			\]
			for all $x,y\in\Omega.$
		\end{re}
	
		Further information on fractional Sobolev spaces and 
		many references may be found in 
		\cite{Adams, DD, DNPV, Grisvard,IMS}. 
		
	\subsection{Dirichlet Problems.} {\label{subsecDP}}
	
	$\mbox{}$\vspace{3mm}

		Let $\Omega$ be a bounded open set in $\mathbb{R}^N,$
		$s\in(0,1),$ and $f\in \dwsp.$  We say that 
		$f\ge(\le)0$ if for any $v\in\wwsp,$ $v\ge0$
		we have that $\langle f,v \rangle \ge (\le)0.$
		
		\medskip
		
		We say that $u\in\hwsp$ is a weak  
		super-solution of 
		\begin{equation}
			\label{eq:dp}
			\begin{cases}
				(-\Delta_p)^s u=f(x) &\mbox{in } \Omega,\\
				u=0 &\mbox{in }\Omega^c, 
			\end{cases} 
		\end{equation}
	 	if $u\ge0$ a.e. in $\Omega^c$ and 
		\[
			\K \int_{\R^{2N}}\dfrac{|u(x)-u(y)|^{p-2}(u(x)-u(y))
			(v(x)-v(y))}
			{|x-y|^{N+sp}}
			dxdy\ge\langle f,v\rangle ,
		\] 
		for each $v\in\wwsp,v\ge0.$ 
		
		A function $u\in\hwsp$ is a weak 
		sub-solution of \eqref{eq:dp} 
		if $-u$ is a weak super-solution. Finally, 
		a function $u\in\hwsp$ is a weak 
		solution of \eqref{eq:dp} 
		if and only if is both a weak super-solution and a 
		weak sub-solution.
		
		\medskip
	
		Our next result is a minimum principle. 
		
		\begin{lem}\label{lema:mprinciple} 
			Let $f\in \dwsp$ be such that $f\ge0,$ 
			and  $u$ be a weak super-solution 
			of \eqref{eq:dp}. 
			Then either $u>0$ {a.e.} in $\Omega$ or $u=0$ 
			a.e. in $\Omega.$			  				
		\end{lem}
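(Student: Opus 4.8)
The plan is to first show that $u\ge 0$ a.e.\ in $\R^N$, and then to upgrade this to the stated dichotomy.

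\emph{Step 1: nonnegativity.} Since $u\in\hwsp$ with $u\ge 0$ a.e.\ in $\Omega^c$, the function $u_{-}$ lies in $\wwsp$ and is nonnegative, hence admissible in the definition of weak super-solution. Testing with $v=u_{-}$ yields
\[
\K\int_{\R^{2N}}\frac{|u(x)-u(y)|^{p-2}\big(u(x)-u(y)\big)\big(u_{-}(x)-u_{-}(y)\big)}{|x-y|^{N+sp}}\,dx\,dy\ \ge\ \langle f,u_{-}\rangle\ \ge\ 0 .
\]
I would then use the elementary inequality $|a-b|^{p-2}(a-b)(a_{-}-b_{-})\le-|a_{-}-b_{-}|^{p}$, valid for all $a,b\in\R$ and checked by a short case analysis on the signs of $a$ and $b$. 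Applied with $a=u(x)$ and $b=u(y)$ it bounds the left-hand side from above by $-\K\,|u_{-}|_{\wspr}^{p}\le 0$, so $|u_{-}|_{\wspr}=0$; hence $u_{-}$ is a.e.\ constant, and as $u_{-}=0$ on the set $\Omega^c$ of positive measure we conclude $u_{-}\equiv 0$, that is, $u\ge 0$ a.e.

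\emph{Step 2: dichotomy.} Assume $u$ is not a.e.\ $0$ in $\Omega$, and suppose for contradiction that $Z\coloneqq\{x\in\Omega:u(x)=0\}$ has positive measure. The crucial observation is that if $v\in\wwsp$ is nonnegative and supported in $Z$, then the integrand $|u(x)-u(y)|^{p-2}(u(x)-u(y))(v(x)-v(y))$ vanishes on $Z\times Z$ and on $(\R^N\setminus Z)\times(\R^N\setminus Z)$, and equals $-u(y)^{p-1}v(x)\le 0$ on $Z\times(\R^N\setminus Z)$ because $u\ge 0$ a.e.; the weak super-solution inequality, whose right-hand side $\langle f,v\rangle\ge 0$, then forces
\[
\int_{Z}\int_{\R^N\setminus Z}\frac{u(y)^{p-1}\,v(x)}{|x-y|^{N+sp}}\,dy\,dx=0 .
\]
Since $\{u>0\}$ has positive measure and is contained in $\R^N\setminus Z$, the inner integral is strictly positive for every $x$, whence $v\equiv0$ a.e.\ on $Z$ and therefore $v\equiv0$. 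Hence it suffices to produce one nontrivial nonnegative $v\in\wwsp$ supported in $Z$.

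This last point is the main obstacle, because $Z$ need not contain a ball; I would settle it with a logarithmic energy estimate. Since $f\ge 0$, the function $u$ is a nonnegative weak super-solution of $(-\Delta_{p})^{s}u\ge 0$ that is nonnegative on all of $\R^N$, so the nonlocal tail of its negative part vanishes and the fractional logarithmic Caccioppoli inequality gives, for every ball $B_{2r}(x_0)\subset\subset\Omega$ and every $d>0$,
\[
\int_{B_{r}(x_0)}\int_{B_{r}(x_0)}\left|\log\frac{u(x)+d}{u(y)+d}\right|^{p}\frac{dx\,dy}{|x-y|^{N+sp}}\ \le\ C(N,s,p)\,r^{N-sp},
\]
with $C$ independent of $d$. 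If some ball $B_{r}(x_0)\subset\subset\Omega$ satisfied both $|Z\cap B_{r}(x_0)|>0$ and $|\{u>\delta\}\cap B_{r}(x_0)|>0$ for some $\delta>0$, then letting $d\to 0^{+}$ the left-hand side would diverge like $|\log d|^{p}$, a contradiction; hence on every ball compactly contained in $\Omega$ one has either $u=0$ a.e.\ or $u>0$ a.e., and consequently $\Omega$ decomposes into disjoint open sets $G_{0}$ and $G_{+}$ on which $u=0$ a.e.\ and $u>0$ a.e., respectively. As $u\not\equiv0$ while $|Z|>0$, the set $G_{0}$ is nonempty, and any $v\in C_{c}^{\infty}(G_{0})$ with $v\ge0$ and $v\not\equiv0$ is a nontrivial nonnegative element of $\wwsp$ supported in $Z$, contradicting the previous paragraph. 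Therefore $|Z|=0$, i.e.\ $u>0$ a.e.\ in $\Omega$, which gives the claimed dichotomy.
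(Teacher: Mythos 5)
Your proof is correct, but it reaches the conclusion by a partly different route than the paper, and the comparison is instructive. The one step you share with the paper is the decisive nonlocal computation: testing the super-solution inequality with a nonnegative $v\in\wwsp$ vanishing outside the region where $u=0$ reduces the left-hand side to $-2\iint u(y)^{p-1}v(x)|x-y|^{-N-sp}\,dy\,dx\le0$, which together with $\langle f,v\rangle\ge0$ kills any nontrivial zero region; the paper runs exactly this computation on a connected component $U$ of $\Omega$ with $u\equiv0$ there, while you run it on your open set $G_0$. Everything else you do differently: the paper obtains $u\ge0$ from the comparison principle \cite[Proposition 2.10]{IMS} and the dichotomy on connected sets from \cite[Theorem A.1]{BF}, then reduces to connected components; you prove nonnegativity directly by testing with $u_-$ (your inequality $|a-b|^{p-2}(a-b)(a_--b_-)\le-|a_--b_-|^{p}$ checks out in all sign cases, and $u_-\in\wwsp$ precisely because $u\ge0$ a.e.\ in $\Omega^c$), and you derive the ball-by-ball dichotomy from the fractional logarithmic Caccioppoli estimate, which also lets you bypass connected components entirely via the decomposition $\Omega=G_0\cup G_+$. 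What your route buys is self-containedness and a uniform treatment of disconnected $\Omega$; what it costs is the logarithmic lemma, which you invoke without a reference --- it is the logarithmic estimate of Di Castro--Kuusi--Palatucci for nonnegative supersolutions, and in your tail-free situation ($u_-\equiv0$ in $\R^N$) the constant is indeed independent of $d$, so your statement is accurate; note that \cite[Theorem A.1]{BF}, which the paper cites, is proved by essentially this estimate, so you have in effect re-derived the strong minimum principle rather than avoided it. Two small points to polish: your test functions lie in $C_c^\infty(G_0)$ and $G_0$ coincides with $Z$ only up to a null set, so the cross-term computation should be phrased on $G_0$ (where $u=0$ a.e.) rather than on $Z$; and the dichotomy is established for balls $B_r(x_0)$ with $B_{2r}(x_0)\subset\subset\Omega$, which is all the covering argument requires.
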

	
		\begin{proof}
			Since $u$ is a weak super-solution of \eqref{eq:dp},
			it follows from the comparison principle
			(see \cite[Proposition 2.10]{IMS}) that 
			$u\ge0$ in $\R^N.$ Moreover, if $\Omega$ is  connected, 
			by  \cite[Theorem A.1]{BF}, we get if 
			$u\not=0$ a.e. in $\Omega$ then $u>0$ a.e. in $\Omega.$
				
			Then, we only need to 
			show that $u\not\equiv0$ in $\Omega$ if only if 
			$u\not\equiv0$ in all connected components of $\Omega.$
			That is, we only need  to show that if
			$u\not\equiv0$ in $\Omega$ then $u\not\equiv0$ 
			in all connected components of $\Omega.$
		
			Suppose, to the contrary, that is
			$u\not\equiv0$  and there is a connected component 
			$U$ of $\Omega$ such that $u\equiv0$ in $U.$ 
			Moreover, for any nonnegative function 
			$v \in \widetilde{W}^{s,p}(U)$ we get
			\begin{align*}
				0&\le\int_{\R^{2N}}\dfrac{|u(x)-u(y)|^{p-2}(u(x)-u(y))
				(v(x)-v(y))}{|x-y|^{N+sp}} dxdy\\
				&=-2\int_{U}\int_{U^c}
				\dfrac{|u(x)|^{p-2}u(x)v(y)}{|x-y|^{N+sp}}dxdy
			\end{align*}
			due to $u\equiv0$ in $U.$ Then $u=0$ a.e. in $U^c,$
			that is $u=0$ a.e. in $\R^N,$ 
			which is a contradiction with the fact that
			$u\not=0$ a.e. in $\Omega.$
		\end{proof}
		
		\medskip
	
	 	To prove the Theorem \ref{teo:fred2}, 
	 	we will use the homotopy property 
	 	of the Leray-Schauder degree. 
		For this reason, we need to recall 
		some properties of the Dirichlet problem for
		the fractional $p-$Laplace equations.
		
		\medskip  
		
		Let $f\in W^{-s,p'}(\Omega).$
		If $\Omega$ is a smooth bounded domain, using 
		the fractional Sobolev compact 
		embedding theorem (see \cite{Adams,DD}), 
		it is easily seen that \eqref{eq:dp} has a 
		unique weak solution $u_{f}\in \wwsp.$ Moreover, the 
		operator
		\begin{align*}
			\RR_{s,p}\colon W^{-s,p'}(\Omega)&\to\wwsp\\
					f&\to u_{f}
		\end{align*}
		is continuous, see \cite{DelPezzoQuaas}
	
		Now, let $\Omega$ be a smooth bounded domain, 
		$f\in W^{-s,p'}(\Omega)$ and $t\in\mathbb{R},$ 
		we define the operator $T_t\colon\wwsp\to \wwsp$ by 
		\[
			T_t(u)\coloneqq
			\RR_{s,p}(\lambda |u|^{p-2}u+t f).
		\]
		Notice that by the fractional Sobolev compact embedding 
		theorem	and the continuity of  $\RR_{s,p}$ 
		we have that $T_t$ is a completely continuous operator.

	\subsection{Eigenvalue Problems.} 
	
	$\mbox{}$\vspace{3mm}

	Now we study the following 
	eigenvalue problems

	\begin{equation}
		\label{ec:eigenvaluenolocal}
		\begin{cases}
			(-\Delta_p)^su=\lambda |u|^{p-2}u &\text{ in } \Omega,\\
			u=0 & \text{ in } \Omega^c,
		\end{cases}
	\end{equation}
	
	\medskip
	
	We say that $\lambda$ is an eigenvalue of $(-\Delta_p)^s$
	if there is a function $u\in\wwsp\setminus\{0\}$ such that
	for any $v\in\wwsp$
	\[
		\K\int_{\mathbb{R}^{2N}} 
		\dfrac{|u(x)-u(y)|^{p-2}(u(x)-u(y))(v(x)-v(y))}
			{|x-y|^{N+sp}}dxdy 
		= \lambda\int_\Omega|u|^{p-2}uv dx.	
	\]
	The function $u$ is a corresponding eigenfunction of 
	$(-\Delta_p)^s$ associated to $\lambda.$
	
	\medskip
	
	Before showing the existence of a sequence of eigenvalues, 
	we need to introduce  some additional notation. Following
	\cite{MR3411543}, we define
	\[
		\mathcal{S}^{s,p}\coloneqq\left\{ u\in \wwsp\colon 
		\|u\|_{\scriptstyle\lp}=1\right\}, 
	\]
	and 
	\[
		\mathcal{W}^{s,p}_m\coloneqq
		\left\{K\subset\mathcal{S}^{s,p}\colon K \text{ is symmetric
		and compact, } i(K)\ge m \right\}
	\]
	for $m\in\mathbb{N}.$ Here $i$ denotes the 
	Krasnosel'ski\v{\i} genus.
	
	\medskip
	
	For the proof of the following theorem  see
	\cite{brasco2014second,MR3411543,FP,DelPezzoQuaas,LL}  
	(for the local case, see
	\cite{MR1726923,MR2794523,Anane,MR1007505,Lindqvist}).
	\begin{teo}\label{teo:autovalores}
		Let $\Omega$ a smooth bounded  domain of $\R^N.$ 
		Then there is a sequence of eigenvalues
		of $(-\Delta_p)^s$
		\[
			\lambda_m(s,p)=\inf_{K\in\mathcal{W}^{s,p}_m}
			\max_{u\in K} \K |u|_{\scriptstyle\wspr}.
		\]
		Moreover 
		\begin{itemize}
			\item If $u$ is an eigenfunction of $(-\Delta_p)^s$
				then $u\in L^{\infty}(\Omega).$
			\item $\lambda_1(s,p)$ is the first eigenvalue of 
					$(-\Delta_p)^s,$ that is  
					\[
						\lambda_1(s,p)=\inf\left\{ 
							\K|u|_{\scriptstyle\wspr}^p\colon
						u\in\mathcal{S}^{s,p}\right\}.
					\]					
			\item $\lambda_1(s,p)$ is simple and isolated.
			\item Any eigenfunction of $(-\Delta_p)^s$
				 associated to $\lambda_1(s,p)$ have constant sign.
			\item If $u$ is an eigenfunction of
				$(-\Delta_p)^s$ associated to 
				$\lambda>\lambda_1(s,p)$ then
				$u$ must be sign-changing.
			\item $\lambda_2(s,p)$ is the second eigenvalue
				\begin{align*}
					\lambda_2(s,p)
						&=\inf_{\gamma\in\Gamma(w_1,-w_1)}
						\max_{u\in\mathrm{Im}\gamma(0,1)} 
						\K|u|_{\scriptstyle\wspr}^p\\
						&=\inf\{\lambda\colon
						\lambda >\lambda_1(s,p) 
						\text{ is an eigenvalue of }
						(-\Delta_p)^s\},
					\end{align*}
				where $w_1$ is an eigenfunction 
				of $(-\Delta_p)^s$
				associated to $\lambda_1(s,p)$ and $\Gamma(w_1,-w_1)$
				is the set of continuous paths on $\mathcal{S}^{s,p}$
				connecting to $w_1$ and $-w_1.$
			\end{itemize}
	\end{teo}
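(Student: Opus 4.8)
The sequence $\{\lambda_m(s,p)\}$ will be produced by Lusternik--Schnirelmann theory applied to the even $C^1$ functional $J(u):=\K\,|u|_{\wspr}^p$ restricted to the manifold $\mathcal{S}^{s,p}$. The key analytic input is that, $\Omega$ being bounded and smooth, the embedding $\wwsp\hookrightarrow L^p(\Omega)$ is compact while $|\cdot|_{\wspr}$ is an equivalent norm on $\wwsp$ (by the Remark following the definition of $\wwsp$); these give a Palais--Smale condition for $J|_{\mathcal{S}^{s,p}}$, once one upgrades weak convergence to strong convergence in $\wwsp$ by the standard strict-convexity (i.e.\ $(S_+)$-type) argument for $(-\Delta_p)^s$. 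A Lagrange-multiplier computation identifies every constrained critical point at level $c$ as a weak eigenfunction with eigenvalue $c$, so the genus minimax values $\lambda_m(s,p)=\inf_{K\in\mathcal{W}^{s,p}_m}\max_{u\in K}J(u)$ are eigenvalues. For $m=1$, $\mathcal{W}^{s,p}_1$ contains every antipodal pair $\{u,-u\}$ with $u\in\mathcal{S}^{s,p}$, so the $m=1$ minimax collapses to $\lambda_1(s,p)=\inf_{\mathcal{S}^{s,p}}J$, which is the asserted formula.

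For the $L^\infty$ bound on eigenfunctions I would run a Moser iteration: test the eigenvalue equation against truncated powers of the eigenfunction, use the fractional Sobolev inequality $\|v\|_{L^{p_s^\star}}\le C\,|v|_{\wspr}$ to obtain a reverse-H\"older inequality on superlevel sets, and iterate; boundedness of $\Omega$ and the critical embedding are precisely what make the iteration close, for every eigenvalue $\lambda$.

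Simplicity and the constant-sign property of the first eigenfunctions rest on two ingredients. First, $\bigl||u(x)|-|u(y)|\bigr|\le|u(x)-u(y)|$ with equality for a.e.\ $(x,y)$ only when $u$ does not change sign; hence any minimizer $u$ of $J$ on $\mathcal{S}^{s,p}$ satisfies $J(|u|)=\lambda_1(s,p)$, so $|u|$ is also an eigenfunction, Lemma~\ref{lema:mprinciple} forces $|u|>0$ a.e., and the equality case then forces $u$ itself to have constant sign. Second, a Picone-type inequality for $(-\Delta_p)^s$ (which I would establish separately, cf.\ Lemma~\ref{lem:laux1}) applied to two positive $\lambda_1$-eigenfunctions yields, through its equality case, that they are proportional; the same inequality applied to a positive $\lambda_1$-eigenfunction and a hypothetical nonnegative $\lambda$-eigenfunction with $\lambda>\lambda_1(s,p)$ gives the contradiction $\lambda\le\lambda_1(s,p)$, so every eigenfunction above the first eigenvalue must be sign-changing.

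The delicate points are the isolation of $\lambda_1(s,p)$ and the identification of $\lambda_2(s,p)$. For isolation, suppose eigenvalues $\mu_k\downarrow\lambda_1(s,p)$ with normalized eigenfunctions $u_k$; by the compactness above $u_k\to w$ strongly in $\wwsp$ with $w$ a $\lambda_1$-eigenfunction, so after fixing a sign $w>0$ a.e.\ and $|\{u_k<0\}|\to0$. On the other hand, testing the $u_k$-equation against $(u_k)_-$ and combining the fractional Sobolev inequality with H\"older on $\{u_k<0\}$ produces a lower bound $|\{u_k<0\}|\ge c_0>0$ independent of $k$ (the measure estimate advertised in the introduction, cf.\ Lemma~\ref{lema:medcotinf1}), a contradiction; the same estimate yields a uniform spectral gap above $\lambda_1(s,p)$. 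For $\lambda_2(s,p)$, one first shows the value $c:=\inf_{\gamma\in\Gamma(w_1,-w_1)}\max_{u\in\mathrm{Im}\,\gamma}\K\,|u|_{\wspr}^p$ is a critical level of $J|_{\mathcal{S}^{s,p}}$ (deformation lemma plus Palais--Smale), hence an eigenvalue $>\lambda_1(s,p)$, so $c\ge\inf\{\lambda:\lambda>\lambda_1(s,p)\text{ eigenvalue}\}$; conversely, from any eigenvalue $\mu>\lambda_1(s,p)$, with eigenfunction $u_\mu$ necessarily sign-changing and $(u_\mu)_\pm\neq0$, one constructs an explicit path in $\mathcal{S}^{s,p}$ joining $w_1$ to $-w_1$ through $u_\mu/\|u_\mu\|_{L^p}$ along which $J\le\mu$, by interpolating $w_1$ with $(u_\mu)_+$ and $-w_1$ with $-(u_\mu)_-$ and using simplicity of $\lambda_1(s,p)$ to control the two sign-definite ends. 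Combining the inequalities gives $c=\lambda_2(s,p)$ and the absence of eigenvalues in $(\lambda_1(s,p),\lambda_2(s,p))$. I expect this last path construction, and the strict-monotonicity estimates it requires, to be the main obstacle.
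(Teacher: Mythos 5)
The paper itself does not prove Theorem \ref{teo:autovalores}: it is quoted from the literature, with the proof delegated to \cite{brasco2014second,MR3411543,FP,DelPezzoQuaas,LL}. Your outline essentially reconstructs the arguments of those references --- genus-based Lusternik--Schnirelmann minimax for $\K|u|_{\wspr}^p$ on $\mathcal{S}^{s,p}$, with the compact embedding and an $(S_+)$-argument giving Palais--Smale; a De Giorgi/Moser-type iteration for the $L^\infty$ bound as in \cite{FP}; Picone (Lemma \ref{lem:laux1}) plus the minimum principle (Lemma \ref{lema:mprinciple}) for constant sign, simplicity, and the sign-changing property above $\lambda_1(s,p)$; Anane-type measure estimates (cf.\ Lemma \ref{lema:medcotinf1}) for isolation; and the Cuesta--de Figueiredo--Gossez mountain-pass characterization of $\lambda_2(s,p)$ as in \cite{brasco2014second}. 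So the route is the same as the one the paper relies on, just sketched rather than cited.

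Two points in your sketch are substantially harder than you present them, and they are exactly where the cited proofs do real work. First, in the path construction for $c\le\mu$: linear interpolation between $w_1$ and $(u_\mu)_+/\|(u_\mu)_+\|_{L^p(\Omega)}$ does not in general keep the Rayleigh quotient below $\mu$; in \cite{brasco2014second} the sign-definite ends of the path are handled via the hidden convexity of the Gagliardo seminorm along curves $\sigma_t=\left((1-t)u^p+tv^p\right)^{1/p}$ of nonnegative functions, established in \cite{BF}, and the middle segment through normalized combinations $s(u_\mu)_+-t(u_\mu)_-$ requires the cross-term identities obtained by testing the eigenvalue equation against $(u_\mu)_\pm$, where the sign of the nonlocal interaction between the positive and negative parts must be exploited rather than discarded. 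Second, the theorem identifies the mountain-pass level with the \emph{genus-based} $\lambda_2(s,p)$ of the minimax sequence; your argument only gives ``mountain-pass level $=\inf\{\lambda>\lambda_1(s,p)\ \text{eigenvalue}\}$.'' To close the loop you still need $\lambda_2(s,p)>\lambda_1(s,p)$ (via simplicity: the set of normalized first eigenfunctions is $\{\pm w_1\}$, which has genus $1$) and $\lambda_2(s,p)\le c$ (the image of any admissible path together with its antipode is a compact, symmetric, connected set containing an antipodal pair, hence of genus at least $2$). With these two ingredients made explicit, your plan coincides with the proofs in the papers the theorem is quoted from.
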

	\begin{re}\label{re:referi}
		It is not difficult to see that, if $u\in\wwsp$ is such that
		\[
			\lambda_1(s,p)=
			\dfrac{\K|u|_{\scriptstyle\wspr}^p}
			{\|u\|_{\scriptstyle\lp}^p}
		\]
		then $u$ is eigenfunction of $(-\Delta_p)^s$ 
		associated to $\lambda_1(s,p).$	
	\end{re}		
	Let finally observe that in  \cite{DelPezzoQuaas}, we also 
	prove that 
	$\lambda_1(\cdot,p)$ is continuous.
	
	\subsection{Regularity results}
	
	$\mbox{}$\vspace{3mm}
	
	Here, we study the regularity up to the boundary of weak solutions 
	of \eqref{eq:antimax3} when $f\in L^\infty(\Omega).$ 
	For this, we need the following results
	
	\begin{lem}\label{lema:Linfinito}	
		Let $f\in L^\infty(\Omega)$ and $\lambda\in\mathbb{R}.$  
		If $u$ is a weak solution of
		\eqref{eq:antimax3} then $u\in L^{\infty}(\Omega).$
	\end{lem}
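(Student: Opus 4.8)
The plan is to obtain the $L^\infty$ bound by a Moser iteration applied directly to \eqref{eq:antimax3}, treating $g:=\lambda|u|^{p-2}u+f$ as a right-hand side whose integrability is improved and re-fed at each step. This is, up to the harmless presence of the bounded datum $f$ and of the subcritical term $\lambda|u|^{p-2}u$, exactly the scheme used to prove that the eigenfunctions of $(-\Delta_p)^s$ are bounded (see \cite{brasco2014second,IMS} and \cite{MR3411543}), so I would follow it closely. First I would record the starting integrability: a weak solution of \eqref{eq:antimax3} belongs to $\wwsp$, hence by the fractional Sobolev inequality $u\in L^{p_s^\star}(\Omega)$ when $sp<N$ (and $u\in L^{q}(\Omega)$ for every $q<\infty$ when $sp\ge N$, a case in which the argument below is only easier, so I assume $sp<N$). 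Consequently $|u|^{p-2}u\in L^{p_s^\star/(p-1)}(\Omega)$ and, since $f\in L^\infty(\Omega)$, also $g\in L^{q_0}(\Omega)$ with $q_0:=p_s^\star/(p-1)$. The decisive structural fact is the strict inequality $q_0(p-1)=p_s^\star>p$: the factor $p-1$ lost to the zeroth order term is strictly dominated by the Sobolev gain, so in this argument $\lambda|u|^{p-2}u$ plays the role of a lower-order perturbation and the sign of $\lambda$ is irrelevant.

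Next I would carry out the iteration. For $M>0$ and $\beta\ge1$, I would test the weak formulation of \eqref{eq:antimax3} with the truncated power $v=\bigl(\min\{u_+,M\}\bigr)^{\beta}$, which is admissible in $\wwsp$ because it is the composition of $u$ with a bounded Lipschitz function vanishing at the origin. On the left-hand side I would use the convexity inequality $|a-b|^{p-2}(a-b)\bigl(\varphi(a)-\varphi(b)\bigr)\ge c_p\,\bigl|\psi(a)-\psi(b)\bigr|^p$, valid for nondecreasing $\varphi$ with $\psi$ a primitive of $(\varphi')^{1/p}$ (the algebraic lemma used for the eigenfunctions in \cite{brasco2014second,IMS}), to bound it below by a constant times $|\psi(u_+)|_{\wspr}^{p}$, and then the fractional Sobolev inequality, which produces a gain of the fixed factor $\chi:=p_s^\star/p=\tfrac{N}{N-sp}>1$ on the Lebesgue exponent; on the right-hand side I would bound $\int_\Omega g\,v\,dx$ by Hölder's inequality, choosing $\beta$ at each stage so that the Hölder exponent matches the integrability of $u$ reached so far. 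After letting $M\to\infty$ by monotone convergence (the estimate being uniform in $M$), this yields the recursion $a_{j+1}=\chi\,a_j$, $a_0=p_s^\star$, together with an estimate of the form
\[
	\log\bigl(1+\|u_+\|_{L^{a_{j+1}}(\Omega)}\bigr)\ \le\ \varepsilon_j+(1+\delta_j)\,\log\bigl(1+\|u_+\|_{L^{a_j}(\Omega)}\bigr),
\]
with $\sum_j\varepsilon_j<\infty$ and $\sum_j\delta_j<\infty$ because $a_j\to\infty$ geometrically (here $\varepsilon_j$ absorbs the constants $\sim a_j^{\sigma}$ coming from the Moser step, and $\delta_j\sim(p-1)/a_j$ comes from the zeroth order term). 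Hence $\sup_j\|u_+\|_{L^{a_j}(\Omega)}<\infty$, and since $a_j\to\infty$ and $|\Omega|<\infty$ this gives $u_+\in L^\infty(\Omega)$, with norm controlled by $N,s,p,\lambda,\|f\|_{L^\infty(\Omega)},|\Omega|$ and $\|u\|_{L^{p_s^\star}(\Omega)}$. Running the same argument on $-u$, which solves \eqref{eq:antimax3} with datum $-g\in L^{q_0}(\Omega)$, bounds $u_-$, and therefore $u\in L^\infty(\Omega)$.

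I expect the only genuine difficulty to be the bookkeeping of the nonlocal terms under truncation: the power test functions $u_+^{\beta}$ are not in $\wwsp$ unless first truncated, so one must work with the $M$-cut-off functions throughout, verify that the tail/cross contributions produced by $(-\Delta_p)^s$ (from the region where one variable lies in $\Omega$ and the other in $\Omega^c$) have a sign compatible with the convexity estimate and can be combined with the local part to reconstruct $|\psi(u_+)|_{\wspr}^p$, and only then pass to the limit $M\to\infty$. All of this is carried out in detail for the eigenvalue equation in \cite{brasco2014second,IMS}; in the present situation nothing changes except for the addition of $f\in L^\infty(\Omega)$ on the right-hand side — which contributes only terms that are immediately absorbed by Hölder's inequality — and the fact, noted above, that the zeroth order term $\lambda|u|^{p-2}u$ is subcritical, so it does not interfere with the convergence of the iteration.
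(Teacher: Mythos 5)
Your argument is correct, but it takes a genuinely different (Moser-type) route from the paper, which runs a De Giorgi level-set iteration instead. Concretely, after reducing to $sp\le N$, the paper exploits the $(p-1)$-homogeneity of the equation to normalize $\|u\|_{\lp}=\sqrt{\delta}$ small (the rescaled function solves \eqref{eq:antimax3} with $f$ replaced by $t^{p-1}f$), tests with the single truncations $v_{k+1}=(u-1+2^{-(k+1)})_+$, and with only H\"older, Chebyshev and the fractional Sobolev embedding derives the recursion $U_{k+1}\le C^k U_k^{\rho}$ for $U_k=\|v_k\|_{\lp}^p$ and $\rho=2-\nicefrac{p}{q}>1$ with a fixed $q\in(p,p_s^\star)$, concluding $u\le 1$ by the fast geometric convergence lemma as in \cite{MR3161511}; the negative part is then handled by replacing $u$ with $-u$, exactly as you do. Your scheme --- truncated powers $(\min\{u_+,M\})^\beta$, the convexity inequality producing $|\psi(u_+)|_{\wspr}^p$, the fixed Sobolev gain $\chi=p_s^\star/p$, geometrically growing exponents and summable constants --- is the one used for eigenfunctions in \cite{FP,brasco2014second,IMS}, and it closes here without difficulty since the term $\lambda|u|^{p-2}u$ only reproduces the current exponent while $f\in L^\infty(\Omega)$ is absorbed by H\"older; what it buys is an explicit quantitative bound of $\|u\|_{L^\infty(\Omega)}$ in terms of $N,s,p,\lambda,\|f\|_{L^\infty(\Omega)},|\Omega|$ and $\|u\|_{L^{p_s^\star}(\Omega)}$, whereas the paper's normalization trick keeps the iteration technically lighter (one elementary test function per step, no algebraic lemma, no $M\to\infty$ limit) at the price of a bound that is only implicit through the rescaling. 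One simplification you can make: the tail/cross-term issue you flag as the main difficulty is actually vacuous, because your test function is a bounded nondecreasing function of $u$ vanishing on $(-\infty,0]$, so the pointwise convexity inequality applies to every pair $(x,y)\in\R^{2N}$ and, since $\psi(u)=\psi(u_+)$, it directly reconstructs the full seminorm $|\psi(u_+)|_{\wspr}^p$ with no separate sign discussion for the interactions between $\Omega$ and $\Omega^c$.
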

	\begin{proof}
		In this proof, we borrow ideas from \cite{FP,MR3060890}.
		
		If $ps>N,$ then $u\in L^\infty(\Omega)$ due to the fractional
		Sobolev embedding theorem. For the rest of the proof, we assume 
		$sp\le N.$
		
		Let $u$ be a a weak solution of \eqref{eq:antimax3}. Up to 
		multiplying $u$ by a small constant we may assume that 
		\[
			\|u\|_{\scriptstyle \lp}=\sqrt{\delta}
		\]
		where $\delta>0$ will be selected below.
		
		For any $k\in\mathbb{N},$ we define $v_k\coloneqq (u-1+2^{-k})_+$
		and $U_k=\|v_k\|_{\scriptstyle \lp}^p.$ Observe that, for any 
		$k\in\mathbb{N}$ we have that 
		\[
			v_k\in\wwsp, \quad  v_{k+1}\le v_k
			\text{ a.e. in } \mathbb{R}^N
			\quad \text{ and }
		\]
		\begin{equation}\label{ec:conj}
			\{x\in\Omega\colon v_{k+1}>0\}
			\subset\{x\in\Omega\colon v_{k}>2^{-(k+1)}\}.
		\end{equation}	
		Moreover $U_k\to\|(u-1)_+\|_{\scriptstyle \lp}$ as 
		$k\to\infty.$		
		Then, for any $k\in\mathbb{N}$
		\begin{align*}
			\K|v_k|_{\scriptstyle\wsp}^p&=\K\int_{\Omega^2}
			\dfrac{|v_{k+1}(x)-v_{k+1}(y)|^p}{|x-y|^{N+sp}} dxdy\\
			&\le\K\int_{\mathbb{R}^{2N}}
			\dfrac{|u(x)-u(y)|^{p-2}(u(x)-u(y))
			(v_{k+1}(x)-v_{k+1}(y))}{|x-y|^{N+sp}} dxdy\\
			&=\lambda\int_{\Omega}v_{k+1}^p dx + 
			\int_{\Omega}f(x)v_{k+1} dx\\
			&\le|\lambda|U_k + 
			\|f\|_{\scriptstyle L^\infty(\Omega)}
			\int_{\Omega}v_{k+1} dx\\
			&\le|\lambda|U_k + 
			\|f\|_{\scriptstyle L^\infty(\Omega)}
			|\{x\in\Omega\colon v_{k+1}>0\}|^{1-\nicefrac1p}
			U_k^{\nicefrac1p}.
		\end{align*}
		 
		 By \eqref{ec:conj}, we get
		 \begin{equation}\label{ec:uk}
				U_k=\|v_k\|_{\scriptstyle\lp}^p
		 	\ge2^{-p(k+1)}|\{x\in\Omega\colon v_{k+1}>0\}|
		\end{equation}
		then
		\begin{equation}\label{ec:vk}
			\K|v_k|_{\scriptstyle\wsp}^p
			\le\left(|\lambda|+\|f\|_{\scriptstyle L^{\infty}(\Omega)}
			2^{(p-1)}\right)2^{(p-1)k}U_k.
		\end{equation}
		
		Thus, given $q\in(p,p_s^\star),$ 
		by the Holder inequality,
		the fractional Sobolev embedding theorem,
		\eqref{ec:uk} and \eqref{ec:vk}, we have that
		\begin{align*}
				U_{k+1}&\le \|v_{k+1}\|_{\scriptstyle L^{q}(\Omega)}^p
				|\{x\in\Omega\colon v_{k+1}>0\}|^{1-\nicefrac{p}q}\\
				&\le C|v_k|_{\scriptstyle\wsp}^p
				\left(2^{p(k+1)}U_k\right)^{1-\nicefrac{p}q}\\
				&\le C
				\left(|\lambda|+\|f\|_{\scriptstyle L^{\infty}(\Omega)}
				2^{(p-1)}\right)
				2^{(p-\nicefrac{p^2}q)}
				2^{(2p-1-\nicefrac{p^2}q)k}
				U_k^{2-\nicefrac{q}{p}}\\
				&\le \left\{\left[1+C
				\left(|\lambda|+\|f\|_{\scriptstyle L^{\infty}(\Omega)}
				2^{(p-1)}\right)
				2^{(p-\nicefrac{p^2}q)}\right]
				2^{(2p-1-\nicefrac{p^2}q)}\right\}^k
				U_k^{2-\nicefrac{p}{q}}\\
				&=C^kU_k^{\rho}
		\end{align*} 
		where $C>1$ and $\rho=2-\nicefrac{p}{q}>1.$
		
		Now, we choose the number $\delta>0$ sufficiently small that
		\[
			\delta^\rho<\frac1{C^{\nicefrac1{(\rho-1)}}}
		\]
		and proceeding as in the end of the proof 
		of \cite[Proposition 7]{MR3161511}, we can conclude that $u\le1$
		a.e. in $\Omega.$ By replacing $u$ with $-u$ we obtain 
		$\|u\|_{\scriptstyle L^\infty(\Omega)}\le1.$ 
	\end{proof}
	
	Then, by the previous lemma, \cite[Theorem 1.1.]{IMS} and
	\cite[Proposition 1.1 and Theorem 1.2]{MR3168912}, we have
	
	\begin{teo}\label{the:reg}
		Let $\Omega$ be a bounded domain with $C^{1,1}$
		boundary, $f\in L^{\infty}(\Omega),$  $\lambda\in\mathbb{R},$
		and $\delta(x)=\mbox{dist}(x,\partial\Omega).$
		Then, there is $\alpha\in(0,s]$ and $C,$ depending  on $\Omega$
		such that for all weak solution $u$ of \eqref{eq:antimax3},
		$u\in C^{\alpha}(\overline{\Omega})$ and
		\[
			\|u\|_{\scriptstyle C^{\alpha}(\overline{\Omega})}
			\le C\left(|\lambda|\|u\|_{\scriptstyle L^{\infty}(\Omega)}
			+\|f\|_{\scriptstyle L^{\infty}(\Omega)}\right).
		\]
		In additional, if $p=2$ then $\alpha=s$ and 
		\[
			\nicefrac{u}{\delta^s}\in C^{\beta}(\overline{\Omega})
			\quad\text{ and }\quad
			\|\nicefrac{u}{\delta^s}\|_{\scriptstyle 
			C^{\beta}(\overline{\Omega})}\le 
			D\left(|\lambda|\|u\|_{\scriptstyle L^{\infty}(\Omega)}
			+\|f\|_{\scriptstyle L^{\infty}(\Omega)}\right)
		\]
		where $\beta\in(0,\min\{s,1-s\}).$ The constants $\beta$ and
		$D$ depend only on $\Omega$ and $s.$ 
	\end{teo}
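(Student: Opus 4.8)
The plan is to derive Theorem \ref{the:reg} from Lemma \ref{lema:Linfinito} together with the global H\"older regularity theory for the fractional $p$-Laplacian with bounded right-hand side. First I would invoke Lemma \ref{lema:Linfinito}: since $f\in L^\infty(\Omega)$, any weak solution $u$ of \eqref{eq:antimax3} belongs to $L^\infty(\Omega)$. Hence the function $g(x)\coloneqq\lambda|u(x)|^{p-2}u(x)+f(x)$ lies in $L^\infty(\Omega)$, with
\[
\|g\|_{L^\infty(\Omega)}\le |\lambda|\,\|u\|_{L^\infty(\Omega)}^{p-1}+\|f\|_{L^\infty(\Omega)},
\]
and $u$ is a weak solution of the Dirichlet problem $(-\Delta_p)^su=g$ in $\Omega$, $u=0$ in $\Omega^c$, whose datum is now bounded.

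Next I would apply \cite[Theorem 1.1]{IMS} to this problem on the $C^{1,1}$ domain $\Omega$. That result yields an exponent $\alpha\in(0,s]$, depending only on $N,p,s$, and a constant $C$, depending only on $N,p,s$ and $\Omega$, such that $u\in C^{\alpha}(\overline{\Omega})$ and
\[
\|u\|_{C^{\alpha}(\overline{\Omega})}\le C\,\|g\|_{L^\infty(\Omega)}^{\frac1{p-1}},
\]
the power $\frac1{p-1}$ being forced by the $(p-1)$-homogeneity of $(-\Delta_p)^s$. Inserting the bound on $\|g\|_{L^\infty(\Omega)}$ and using the elementary inequality $(a+b)^{\frac1{p-1}}\le 2^{\frac1{p-1}}\bigl(a^{\frac1{p-1}}+b^{\frac1{p-1}}\bigr)$ for $a,b\ge0$, I would arrive, after renaming the constant, at the first assertion. (When $sp>N$ this step can be bypassed altogether by the fractional Sobolev embedding $\wwsp\hookrightarrow C^{0,s-N/p}(\overline{\Omega})$, exactly as at the beginning of the proof of Lemma \ref{lema:Linfinito}.)

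When $p=2$ the equation becomes $(-\Delta)^su=g$ in $\Omega$, $u=0$ in $\Omega^c$, with $g\in L^\infty(\Omega)$ and $\Omega$ of class $C^{1,1}$, so I would instead quote the sharp boundary regularity of Ros-Oton and Serra. By \cite[Proposition 1.1]{MR3168912} one gets $u\in C^{s}(\overline{\Omega})$, hence $\alpha=s$; by \cite[Theorem 1.2]{MR3168912} one gets $\beta\in(0,\min\{s,1-s\})$ and a constant $D$, depending only on $\Omega$ and $s$, such that $u/\delta^s\in C^{\beta}(\overline{\Omega})$ and $\|u/\delta^s\|_{C^{\beta}(\overline{\Omega})}\le D\,\|g\|_{L^\infty(\Omega)}$. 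Since $p-1=1$ here, the right-hand side is directly $\le D\bigl(|\lambda|\,\|u\|_{L^\infty(\Omega)}+\|f\|_{L^\infty(\Omega)}\bigr)$, which is the second assertion.

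I do not expect a real obstacle: the argument is a reduction plus two citations, and the only substantial work — passing from a weak solution with $L^\infty$ data to an $L^\infty$ solution through a De Giorgi/Moser iteration — has already been carried out in Lemma \ref{lema:Linfinito}. The single point that needs a line of care is the matching of normalizations: the cited theorems are stated for the fractional $p$-Laplacian (respectively the fractional Laplacian) with a fixed multiplicative constant, so one must observe that rescaling $g$ by the ratio of that constant to $\K$ (to $2\K$ when $p=2$) is harmless and gets absorbed into $C$ and $D$, and record that $\alpha,\beta,C,D$ depend on $\Omega$ only through $N$, $\mathrm{diam}(\Omega)$ and the $C^{1,1}$ character of $\partial\Omega$.
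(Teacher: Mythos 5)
Your proposal coincides with the paper's own proof, which is exactly the one-line combination of Lemma \ref{lema:Linfinito} with \cite[Theorem 1.1]{IMS} for general $p$ and with \cite[Proposition 1.1 and Theorem 1.2]{MR3168912} for $p=2$; no further detail is given there. The only delicate point is the one you already flag, namely that the cited estimate for general $p$ controls $\|u\|_{C^{\alpha}(\overline{\Omega})}$ by $\|g\|_{L^{\infty}(\Omega)}^{\nicefrac{1}{(p-1)}}$ with $g=\lambda|u|^{p-2}u+f$, so the linear form of the bound in the statement is exactly as loosely justified in the paper as in your write-up.
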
	
	
		Finally, in the linear case, as a consequence of 
		the fractional Hopf lemma (See \cite{MR3533199,DPQH}),
		we have the next result.
		 
	\begin{lem}\label{lema:cono}
		Let $\Omega$ be a bounded domain with $C^{1,1}$
		boundary, $\delta(x)=\mbox{dist}(x,\partial\Omega),$
		and $w_1$ be an eigenfunction of $(-\Delta)^s.$ If 
		$\{v_n\}_{n\in\mathbb{N}}\subset C^{s}(\overline{\Omega})$
		is such that $\nicefrac{v_n}{\delta}\in C(\overline{\Omega})$
		and
		\[
				v_{n}\to w_1\quad\text{ and }\quad
				\dfrac{v_{n}}{\delta^s}\to \dfrac{w_1}{\delta^s}
		\]
		strongly in $\overline{\Omega},$
		then there is $n_0\in\mathbb{N}$ such that $v_n>0$ for all
		$n\ge n_0.$
	\end{lem}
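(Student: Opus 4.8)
The plan is to reduce the lemma to the single estimate
\[
	w_1(x)\ge m\,\delta(x)^s\qquad\text{for all }x\in\overline{\Omega}
\]
for some constant $m>0$, and then to transfer this lower bound to the functions $v_n$ by means of the uniform convergence $v_n/\delta^s\to w_1/\delta^s$ on $\overline{\Omega}$. Indeed, once such an $m$ is available, the uniform convergence provides an $n_0\in\mathbb{N}$ with $v_n/\delta^s\ge m/2$ on $\overline{\Omega}$ for every $n\ge n_0$; since $\delta^s>0$ in $\Omega$, this forces $v_n>0$ in $\Omega$, which is the assertion (on $\partial\Omega$ one only has $v_n\ge 0$, which is consistent with the Dirichlet condition).

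To obtain the estimate I would argue as follows. By Theorem \ref{teo:autovalores} we may assume $w_1>0$ in $\Omega$ and $w_1\in L^{\infty}(\Omega)$; moreover $w_1$ is a weak solution of \eqref{eq:antimax3} in the linear case, with $p=2$, $\lambda=\lambda_1(s,2)$ and $f\equiv 0$. Hence Theorem \ref{the:reg}, applied for $p=2$, guarantees that $w_1/\delta^s$ extends to a function in $C^{\beta}(\overline{\Omega})$; in particular it is continuous on the compact set $\overline{\Omega}$. This extension is strictly positive on $\Omega$, because $w_1>0$ and $\delta^s>0$ there. It is also strictly positive on $\partial\Omega$: since $(-\Delta)^s w_1=\lambda_1(s,2)\,w_1\ge 0$ with $w_1\not\equiv 0$ in $\Omega$ and $w_1\equiv 0$ in $\Omega^c$, the fractional Hopf lemma (see \cite{MR3533199,DPQH}) yields $\liminf_{\Omega\ni x\to x_0}w_1(x)/\delta(x)^s>0$ for every $x_0\in\partial\Omega$, and by the continuity just established this liminf equals the boundary value of $w_1/\delta^s$ at $x_0$. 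Thus $w_1/\delta^s$ is continuous and strictly positive on the compact set $\overline{\Omega}$, so it attains a positive minimum $m>0$, and the displayed estimate follows.

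The genuine content of the lemma is concentrated in this first step, and the main obstacle is precisely knowing that the boundary trace of $w_1/\delta^s$ is well defined and strictly positive. The first fact rests on the fine boundary regularity of Theorem \ref{the:reg}, which is available only for $p=2$ and is the reason the statement is confined to the linear case; the second is the fractional Hopf lemma. Away from $\partial\Omega$ only elementary compactness and the strict positivity of $w_1$ in $\Omega$ are needed, and the passage from $w_1$ to the $v_n$ is an immediate consequence of the uniform convergence hypothesis.
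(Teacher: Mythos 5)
Your proof is correct and follows exactly the route the paper intends (the paper states the lemma without proof, as a consequence of the fractional Hopf lemma): boundary regularity for $p=2$ gives $w_1/\delta^s\in C(\overline{\Omega})$, the Hopf lemma makes its boundary values positive, compactness yields $w_1\ge m\,\delta^s$ with $m>0$, and the uniform convergence of $v_n/\delta^s$ transfers this bound to $v_n$ for large $n$.
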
	
		
	\subsection{Picone inequality}
	
	$\mbox{}$\vspace{3mm}

		For the proof of the following Picone inequality,
		see \cite[Lemma 6.2 ]{Amghibech}.

		\begin{lem}\label{lem:laux1}  
			For every $a_1,a_2\geq 0$ and $b_1,b_2>0$ 
			\[	
				|a_1-a_2|^p 
				\geq 
 				|b_1 -b_2|^{p-2}(b_1-b_2) 
 				\left( \frac{a_1^p}{b_1^{p-1}} - 
 				\frac{a_2^p}{b_2^{p-1}} \right).
 			\]
 			The equality holds if and only if 
 			$(a_1,a_2)=k(b_1,b_2)$  for some 
			constant $k.$
			%For $v>0$ and $u\ge0,$ we have
			%\[
			%	L(u,v)\ge0 \quad \mbox{in } \R^N\times\R^N 
			%\]
			%where
			%\[
			%	L(u,v)(x,y)=
			%	|u(x)-u(y)|^p-|v(x)-v(y)|^{p-2}(v(x)-v(y))
			%    \left(\dfrac{u(x)^p}{v(x)^{p-1}}-
			%    \dfrac{u(y)^p}{v(y)^{p-1}}\right). 
			%\]
			%The equality holds if and only if $u=kv$ a.e. for some 
			%constant $k.$
		\end{lem}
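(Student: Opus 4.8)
The plan is to prove the inequality by reducing it, via symmetry and scaling, to a single-variable inequality that can be settled by elementary calculus. I would \emph{not} try to imitate the standard continuous derivation of Picone's identity — expanding $\nabla(u^p/v^{p-1})$ and applying Young's inequality — because the finite-difference operator $a_1\mapsto a_1-a_2$ obeys no product or chain rule, so that computation has no discrete analogue. Throughout, the hypothesis $b_1,b_2>0$ will be used to justify the divisions.

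First I would dispose of the degenerate configurations. If $a_1=a_2=0$ both sides vanish and $(a_1,a_2)=0\cdot(b_1,b_2)$; if $b_1=b_2$ then the right-hand side is $0\le|a_1-a_2|^p$, with equality exactly when $a_1=a_2$, i.e. $(a_1,a_2)=(a_1/b_1)(b_1,b_2)$. So assume $(a_1,a_2)\neq(0,0)$ and $b_1\neq b_2$. Both sides are invariant under the simultaneous swap $(a_1,b_1)\leftrightarrow(a_2,b_2)$, so I may assume $b_1>b_2>0$, whence $|b_1-b_2|^{p-2}(b_1-b_2)=(b_1-b_2)^{p-1}>0$ and the claim becomes
\[
|a_1-a_2|^p\ \ge\ (b_1-b_2)^{p-1}\Bigl(\tfrac{a_1^p}{b_1^{p-1}}-\tfrac{a_2^p}{b_2^{p-1}}\Bigr).
\]
If $a_1=0$, then $a_2>0$, the right-hand side is $\le 0<a_2^p$, the inequality is strict, and no line $k(b_1,b_2)$ passes through $(0,a_2)$. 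If $a_1>0$, I divide by $a_1^p$ and set $\sigma:=a_2/a_1\ge 0$, $r:=b_2/b_1\in(0,1)$; using $(b_1-b_2)^{p-1}b_1^{-(p-1)}=(1-r)^{p-1}$ and $(b_1-b_2)^{p-1}b_2^{-(p-1)}=(1-r)^{p-1}r^{-(p-1)}$, the inequality is equivalent to the scalar statement
\[
|1-\sigma|^p\ \ge\ (1-r)^{p-1}\Bigl(1-\tfrac{\sigma^p}{r^{p-1}}\Bigr),\qquad \sigma\ge 0,\ r\in(0,1).
\]

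To settle this, for $\sigma\ge 1$ the right-hand side is $\le 0$ (indeed $\sigma^p r^{1-p}>1$), so the inequality is clear and strict. For $\sigma\in[0,1)$ let $G(\sigma)$ denote the left side minus the right side; since $p>1$, $G$ is strictly convex on $(0,1)$, and $G'(\sigma)=0$ reduces — after taking $(p-1)$-th roots in $(1-\sigma)^{p-1}=\bigl((1-r)\sigma/r\bigr)^{p-1}$ — to $r(1-\sigma)=(1-r)\sigma$, i.e. $\sigma=r$; a one-line computation gives $G(r)=(1-r)^p-(1-r)^{p-1}(1-r)=0$. Hence $G\ge 0$ with equality only at $\sigma=r$. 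Tracing back, equality in the lemma holds precisely when $a_2/a_1=b_2/b_1$, i.e. $(a_1,a_2)=k(b_1,b_2)$ with $k=a_1/b_1\ge 0$, and the converse follows by direct substitution ($|a_1-a_2|^p=k^p|b_1-b_2|^p$ equals the right-hand side).

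I do not expect a genuine analytic obstacle here; the only points needing attention are the careful bookkeeping of the degenerate configurations ($a_i=0$, $b_1=b_2$) when tracking the equality case, and the fact that for $p\neq 2$ the convexity of $t\mapsto t^p$ and of $G$ must be invoked on the open interval where the relevant powers are differentiable. As a shortcut, the statement is precisely \cite[Lemma 6.2]{Amghibech} and may simply be cited.
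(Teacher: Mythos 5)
Your argument is correct, but it is worth pointing out that the paper does not prove this lemma at all: it simply invokes \cite[Lemma 6.2]{Amghibech}, as you anticipate in your final sentence. What you supply is therefore a self-contained elementary alternative: the symmetry of both sides under the simultaneous swap $(a_1,b_1)\leftrightarrow(a_2,b_2)$, the homogeneity reduction to the scalar inequality $|1-\sigma|^p\ge(1-r)^{p-1}\bigl(1-\sigma^p r^{1-p}\bigr)$ with $\sigma=a_2/a_1$, $r=b_2/b_1$, and the convexity analysis of $G$ with its unique interior critical point $\sigma=r$ where $G(r)=0$ all check out, and the equality case is traced back correctly, including the degenerate configurations $a_1=a_2=0$ and $b_1=b_2$ (where the usual convention $|t|^{p-2}t=0$ at $t=0$ makes the right-hand side vanish even for $p<2$). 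The only place deserving one more explicit line is the endpoint $\sigma=0$ (i.e.\ $a_2=0<a_1$): strict convexity is only asserted on the open interval, so to exclude equality there either evaluate $G(0)=1-(1-r)^{p-1}>0$ directly or note that $G$ is decreasing on $(0,r)$; either remark closes the point immediately. Compared with citing \cite{Amghibech}, whose result is stated in the discrete (graph) setting, your proof keeps the paper self-contained at the cost of a page of elementary calculus; citing keeps the exposition short, which is the choice the authors made.
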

%%%%%%%%%%%%%%%%%%%%%%%%%%%%%%%%%%%%%%%%%%%%%%
\section{Non--resonant Fredholm alternative problem}\label{FA}
\setcounter{equation}{0}
%%%%%%%%%%%%%%%%%%%%%%%%%%%%%%%%%%%%%%%%%%%%%%

	Let's start this section proving the following existence results for 
	equation \eqref{eq:antimax3} with $\lambda<\lambda_1(s,p)$.
	One of the principal results, that we will use through the rest 
	of this work, is the fractional Sobolev compact embedding theorem.
	For this reason, throughout the rest of this work  
	$\Omega$ is a smooth bounded domain of $\mathbb{R}^N.$

	\begin{teo}
		Let  $f\in \dwsp.$  If $\lambda<\lambda_1(s,p)$ then
		there is a weak solution 
		of \eqref{eq:antimax3}. 	  				
	\end{teo}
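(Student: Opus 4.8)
The plan is to obtain the solution as a global minimizer of the associated energy functional. Define
\[
	J(u)\coloneqq\frac{\K}{p}|u|_{\wspr}^p-\frac{\lambda}{p}\|u\|_{\lp}^p-\langle f,u\rangle,\qquad u\in\wwsp,
\]
which is of class $C^1$ on the Hilbert-like reflexive Banach space $\wwsp$, and whose critical points are precisely the weak solutions of \eqref{eq:antimax3}. So it suffices to show $J$ attains a minimum on $\wwsp$, and for that I would invoke the direct method of the calculus of variations: I need coercivity and weak lower semicontinuity of $J$.

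First I would prove coercivity. By the variational characterization of $\lambda_1(s,p)$ in Theorem \ref{teo:autovalores} we have $\K|u|_{\wspr}^p\ge\lambda_1(s,p)\|u\|_{\lp}^p$ for all $u\in\wwsp$; hence, writing $\lambda=\lambda_1(s,p)-\varepsilon$ with $\varepsilon>0$, one gets
\[
	\frac{\K}{p}|u|_{\wspr}^p-\frac{\lambda}{p}\|u\|_{\lp}^p
	\ge\frac{\varepsilon}{p\,\lambda_1(s,p)}\,\K|u|_{\wspr}^p
	\ge c\,|u|_{\wspr}^p
\]
for some $c>0$ (also using the Remark that $|\cdot|_{\wspr}$ is a norm on $\wwsp$ equivalent to the standard one). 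Since $|\langle f,u\rangle|\le\|f\|_{\dwsp}\,|u|_{\wspr}$ (up to the equivalence constant), we obtain $J(u)\ge c\,|u|_{\wspr}^p-C\,|u|_{\wspr}$, and as $p>1$ this forces $J(u)\to+\infty$ as $|u|_{\wspr}\to\infty$; in particular $J$ is bounded below and any minimizing sequence $\{u_k\}$ is bounded in $\wwsp$.

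Next, by reflexivity I extract a subsequence with $u_k\rightharpoonup u$ in $\wwsp$. The functional $u\mapsto|u|_{\wspr}^p$ is convex and strongly continuous, hence weakly lower semicontinuous; the linear term $\langle f,\cdot\rangle$ is weakly continuous; and by the fractional Sobolev compact embedding $\wwsp\hookrightarrow\hookrightarrow\lp$ (Theorem-level fact used throughout Section \ref{FA}), $u_k\to u$ strongly in $\lp$, so $\|u_k\|_{\lp}^p\to\|u\|_{\lp}^p$. Combining, $J(u)\le\liminf_k J(u_k)=\inf_{\wwsp}J$, so $u$ is a global minimizer and therefore a critical point, i.e.\ a weak solution of \eqref{eq:antimax3}. (Uniqueness, which was announced in the introduction, would follow separately from strict convexity of $\frac{\K}{p}|\cdot|_{\wspr}^p-\frac{\lambda}{p}\|\cdot\|_{\lp}^p$ when $\lambda<\lambda_1(s,p)$, via the standard Clarkson/strict-monotonicity argument applied to the difference of two solutions, but it is not needed for the existence statement here.) The only point requiring a little care — the ``main obstacle'' — is the coercivity estimate precisely at the threshold behaviour of $\lambda_1(s,p)$: one must use the sharp spectral inequality rather than merely $\lambda<\lambda_1(s,p)$ in a crude way, and make sure the gap $\varepsilon>0$ is exploited to dominate the full $\wwsp$-norm via the norm equivalence in the Remark.
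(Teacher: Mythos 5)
Your proposal is correct and takes essentially the same route as the paper: the paper also minimizes the same functional $J$ on $\wwsp$, observing that $\lambda<\lambda_1(s,p)$ makes $J$ coercive, bounded below, strictly convex and sequentially weakly lower semicontinuous, hence it has a unique global minimizer which is the weak solution. One cosmetic remark: your intermediate bound $\frac{\K}{p}|u|_{\wspr}^p-\frac{\lambda}{p}\|u\|_{\lp}^p\ge\frac{\varepsilon}{p\,\lambda_1(s,p)}\K|u|_{\wspr}^p$ uses the spectral inequality in the right direction only when $0\le\lambda<\lambda_1(s,p)$; for $\lambda\le 0$ the term $-\frac{\lambda}{p}\|u\|_{\lp}^p$ is nonnegative and coercivity is immediate, so the conclusion stands in all cases.
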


	\begin{proof}
		The proof of this theorem is standard. First observe that weak 
		solutions of \eqref{eq:antimax3} are critical points of 
		the functional $J\colon\wwsp\to\mathbb{R}$, where
		\[
			J(u)\coloneqq 
			\frac{\K}{p}|u|_{\scriptstyle W^{s,p}(\mathbb{R}^N)}^p-
			\dfrac{\lambda}p\|u\|_{\scriptstyle L^p(\Omega)}^p-
			\langle f,u\rangle.
		\]
		It follows from $\lambda<\lambda_1(s,p)$ that 
		$J$ is bounded below, coercive, strictly convex and
		sequentially weakly lower semi continuous. 
		Thus $J$ has a unique critical point which 
		is a global minimum.
	\end{proof}
		
	Our next aim is to prove Theorem \ref{teo:fred2}, to this end
	we will use the homotopy property of the Leray-Schauder degree. 
	We first prove an a priori bound for the fixed points of $T_t$.

	\begin{lem}\label{lema:cota} 
		If $\lambda_1(s,p)<\lambda<\lambda_2(s,p)$ then there  
		exists $R>0$ such that for all $t\in[0,1]$ 
		there is no solution of $(I-T_t)u=0$ for 
		$|u|_{\scriptstyle\wspr}\geq R$
	\end{lem}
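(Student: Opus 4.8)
The plan is to argue by contradiction. Suppose the statement fails: then for every $R>0$ there exist $t\in[0,1]$ and $u\in\wwsp$ with $|u|_{\scriptstyle\wspr}\geq R$ solving $(I-T_t)u=0$. Taking $R=n$ for $n\in\N$ yields sequences $t_n\in[0,1]$ and $u_n$ with $|u_n|_{\scriptstyle\wspr}\to\infty$ and $u_n=T_{t_n}(u_n)=\RR_{s,p}(\lambda|u_n|^{p-2}u_n+t_nf)$. Unravelling the definition of $\RR_{s,p}$, this means $u_n$ is the weak solution of $(-\Delta_p)^su_n=\lambda|u_n|^{p-2}u_n+t_nf$ in $\Omega$, $u_n=0$ in $\Omega^c$; i.e. for all $v\in\wwsp$,
\[
	\K\int_{\R^{2N}}\frac{|u_n(x)-u_n(y)|^{p-2}(u_n(x)-u_n(y))(v(x)-v(y))}{|x-y|^{N+sp}}\,dxdy=\lambda\int_\Omega|u_n|^{p-2}u_nv\,dx+t_n\langle f,v\rangle.
\]

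Next I would rescale: set $w_n\coloneqq u_n/|u_n|_{\scriptstyle\wspr}$, so $|w_n|_{\scriptstyle\wspr}=1$. Dividing the equation by $|u_n|_{\scriptstyle\wspr}^{p-1}$, one sees $w_n$ solves $(-\Delta_p)^sw_n=\lambda|w_n|^{p-2}w_n+t_n|u_n|_{\scriptstyle\wspr}^{1-p}f$ weakly. Since $|w_n|_{\scriptstyle\wspr}=1$, by the equivalence of $|\cdot|_{\scriptstyle\wspr}$ with the full norm on $\wwsp$ (stated in the Remark after the definition of $\wwsp$) and the fractional Sobolev compact embedding, up to a subsequence $w_n\rightharpoonup w$ weakly in $\wwsp$ and $w_n\to w$ strongly in $L^p(\Omega)$; also $t_n\to t_0\in[0,1]$ and $t_n|u_n|_{\scriptstyle\wspr}^{1-p}\to0$ because $|u_n|_{\scriptstyle\wspr}\to\infty$ and $p>1$. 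Testing the equation for $w_n$ with $v=w_n-w$ and using the strong $L^p$ convergence together with the standard monotonicity/$(S_+)$ argument for the fractional $p$-Laplacian, one upgrades to $w_n\to w$ strongly in $\wwsp$, so $|w|_{\scriptstyle\wspr}=1$ and in particular $w\not\equiv0$. Passing to the limit in the weak formulation, $w$ satisfies
\[
	\K\int_{\R^{2N}}\frac{|w(x)-w(y)|^{p-2}(w(x)-w(y))(v(x)-v(y))}{|x-y|^{N+sp}}\,dxdy=\lambda\int_\Omega|w|^{p-2}wv\,dx
\]
for all $v\in\wwsp$; that is, $w$ is a nontrivial eigenfunction of $(-\Delta_p)^s$ associated to $\lambda$. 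But by Theorem \ref{teo:autovalores}, $\lambda_1(s,p)$ and $\lambda_2(s,p)$ are consecutive in the sense that there is no eigenvalue in the open interval $(\lambda_1(s,p),\lambda_2(s,p))$, so $\lambda\in(\lambda_1(s,p),\lambda_2(s,p))$ cannot be an eigenvalue — a contradiction. This proves the existence of the desired $R$.

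The main obstacle is the passage from weak to strong convergence $w_n\to w$ in $\wwsp$: one must show the fractional $p$-Laplacian enjoys the $(S_+)$ property, i.e. $w_n\rightharpoonup w$ together with $\limsup_n\langle(-\Delta_p)^sw_n,w_n-w\rangle\leq0$ forces $w_n\to w$ strongly. This follows by writing $\langle(-\Delta_p)^sw_n-(-\Delta_p)^sw,\,w_n-w\rangle\geq0$ from the monotonicity of $z\mapsto|z|^{p-2}z$, combining with the fact that the right-hand-side terms $\lambda\int_\Omega|w_n|^{p-2}w_n(w_n-w)+t_n|u_n|_{\scriptstyle\wspr}^{1-p}\langle f,w_n-w\rangle\to0$ (by strong $L^p$ convergence and boundedness), and then invoking the elementary inequalities for $|a-b|^{p-2}(a-b)\cdot(\cdots)$ to conclude that the Gagliardo seminorm of $w_n-w$ tends to zero. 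All of this is by-now standard for the fractional $p$-Laplacian; the only point requiring the hypothesis $\lambda_1(s,p)<\lambda<\lambda_2(s,p)$ is the final contradiction with the spectral gap from Theorem \ref{teo:autovalores}.
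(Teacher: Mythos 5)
Your proposal is correct and follows essentially the same route as the paper: rescale $u_n$ by its Gagliardo seminorm, use the compact Sobolev embedding to pass to the limit, identify the limit as a nontrivial eigenfunction associated to $\lambda$, and contradict the fact (Theorem \ref{teo:autovalores}) that there is no eigenvalue in $(\lambda_1(s,p),\lambda_2(s,p))$. The only difference is that you justify the nontriviality $|w|_{\scriptstyle\wspr}=1$ explicitly via the monotonicity/$(S_+)$ argument, whereas the paper obtains the strong convergence of the normalized sequence implicitly from the continuity of the resolvent $\RR_{s,p}$ (i.e.\ complete continuity of $T_t$); both mechanisms are standard and equivalent here.
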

	\begin{proof} 
		Suppose, to the contrary, that is for all 
		$n\in\mathbb{N}$ there exist $t_n\in[0,1]$
		and  $u_n\in \wwsp$ such that $(I-T_{t_n})u_n=0$ and 
		$|u_n|_{\scriptstyle\wspr}\to\infty$ as $n\to\infty.$
		Let define
		$$
			v_n
			=\dfrac{u_n}{|u_n|_{\scriptstyle\wspr} }
			\quad \forall n\in\N.
		$$
		Then for all $n\in\N,$ we have that $v_n$ is a weak solution of
		\[
			\begin{cases}
				(-\Delta_p)^s u=\lambda_n |u|^{p-2}u + 
				\dfrac{t_n f(x)}{{|u_n|^{p-1}_{\scriptstyle\wspr}}}  
				&\text{in }\Omega,\\
				u=0 &\text{in }\Omega^c.
			\end{cases}
		\]

		Using the fractional Sobolev compact embedding theorem,  
		up to a subsequence (still denoted by $v_n$)
		\begin{align*}
			v_{n}\rightharpoonup v&\quad\mbox{weakly in } \wwsp,\\
			v_{n}\to v &\quad\mbox{strongly in } L^{p}(\Omega).\\
		\end{align*} 
		Thus, $|v|_{\scriptstyle\wspr}=1$ 
		and  since $\nicefrac{t_n f}{|u_n|^{p-1}_{\wspr}}\to 0$ 
		strongly in $\dwsp,$ 
		we have that $v$ is a weak solution of \eqref{eq:antimax3} 
		with $f=0$ getting a contradiction since 
		$\lambda_1(s,p)<\lambda<\lambda_2(s,p)$.			
	\end{proof}
	
	Now we are in position to proof  
	Theorem \ref{teo:fred2}.
	
	\begin{proof}[Proof of Theorem \ref{teo:fred2} ] 
	By Lemma \ref{lema:cota}, the Leray-Schauder degree 
	$d(I-T_t, B(0,R),0)$ is well define and constant for all in 
	$t \in [0,1]$ by the invariance of the degree by homotopy.
	Thus  $d(I-T_t, B(0,R),0)=-1$ since $d(I-T_0, B(0,R),0)=-1$ 
	by Theorem 5.3 of \cite{DelPezzoQuaas}, from here the existence 
	result follows.
	\end{proof}

	Observe that, in the above proof, the fact 
	$d(I-T_0, B(0,R),0)\not=0$ can be established 
	without using the results of \cite{DelPezzoQuaas} as a 
	consequence  of Borsuk theorem 
	(see for example \cite[Theorem 8.3]{KD}).

%%%%%%%%%%%%%%%%%%%%%%%%%%%%%%%%%%%%%%%%%%%%%%
\section{Maximum and anti-maximum principle}\label{anti}
\setcounter{equation}{0}
%%%%%%%%%%%%%%%%%%%%%%%%%%%%%%%%%%%%%%%%%%%%%%
	In this section, we will denote by 
	$w_1$ the positive eigenfunction of $(-\Delta_p)^ s$ 
	associated to $\lambda_1(s,p)$ whose $L^p-$norm is equal to 1. 	
	Since $w_1\in L^\infty(\Omega),$ by \cite{IMS}, 
	there is $\alpha\in (0,1)$ such that  
	$w_1\in C^{\alpha}(\overline{\Omega}).$

	\medskip

	We start proving Theorem \ref{teo:max1}.
	
	\begin{proof}[Proof of Theorem \ref{teo:max1}]
		We only prove the first statement; 
		the another statement can be proved in an analogous way.

		Since $u\ge0$ a.e. in $\Omega^c$ 
		we have that $u_{-}\in\wwsp.$ Then
		\begin{align*}
			\K\int_{\R^{2N}}
			\dfrac{|u(x)-u(y)|^{p-2}(u(x)-u(y))(u_{-}(x)-u_{-}(y))}
			{|x-y|^{N+sp}}&dxdy=\\
			=-&\lambda\int_{\Omega}|u_{-}|^p dx
			+\langle f, u_{-}\rangle,
		\end{align*}
		consequently
		\begin{align*}
			&\lambda\int_{\Omega}|u_{-}|^p dx=\\ &=
			-\K\int_{\R^{2N}}
			\dfrac{|u(x)-u(y)|^{p-2}(u(x)-u(y))(u_{-}(x)-u_{-}(y))}
			{|x-y|^{N+sp}}dxdy+\langle f, u_{-}\rangle\\
			&\ge\K\int_{\R^{2N}}\dfrac{|u_{-}(x)-u_{-}(y)|^{p}}
			{|x-y|^{N+sp}}dxdy.
		\end{align*}
		Thus, if $u_{-}\not\equiv0$ then 
		\[
			\lambda\ge\K
			\dfrac{\displaystyle\int_{\R^{2N}}
			\dfrac{|u_{-}(x)-u_{-}(y)|^{p}}
			{|x-y|^{N+sp}}dxdy}{\displaystyle
			\int_{\Omega}|u_{-}|^p dx}\ge \lambda_1(s,p),
		\]
		a contradiction. Therefore $u\ge0$ in $\R^N.$
		Moreover, proceeding as in the proof of Lemma 
		\ref{lema:mprinciple}, we have that $u\not\equiv0$ 
		in all connected components of $\Omega.$  Finally,
		by \cite[Theorem 2.9]{DPBL} , $u>0$ a.e. in $\Omega.$ 
	\end{proof}

	Before proving Theorem \ref{teo:anti1}, 
	we show some previous results.
	
	\begin{lem}\label{lema:noexiste1}
		Let $\lambda\ge \lambda_1(s,p),$ and $f\in \dwsp$ be such 
		that $f\ge0$ and $f\not\equiv0.$ Then the problem
		\eqref{eq:antimax3}
		has no non-negative weak super-solutions. 
	\end{lem}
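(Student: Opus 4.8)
The plan is to argue by contradiction. Suppose $u\in\hwsp$ is a non-negative weak super-solution of \eqref{eq:antimax3}, so in particular $u\ge0$ a.e.\ in $\R^N$. Since $\lambda\ge\lambda_1(s,p)>0$ and $u\ge0$, the right-hand side $\lambda|u|^{p-2}u+f=\lambda u^{p-1}+f$ is $\ge0$ (and $\not\equiv0$), so arguing exactly as in the proof of Lemma \ref{lema:mprinciple} we first obtain that either $u>0$ a.e.\ in $\Omega$ or $u\equiv0$ a.e.\ in $\Omega$. In the latter case, testing the super-solution inequality with an arbitrary $v\in\wwsp$, $v\ge0$, and splitting $\R^{2N}$ into $\Omega\times\Omega$, $\Omega\times\Omega^c$, $\Omega^c\times\Omega$ and $\Omega^c\times\Omega^c$, one sees (using $u=0$ in $\Omega$ and $u\ge0$, $v=0$ in $\Omega^c$) that the left-hand side is $\le0$, whence $\langle f,v\rangle\le0$ for all such $v$, i.e.\ $f\le0$; together with $f\ge0$ this gives $f\equiv0$, a contradiction. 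Thus $u>0$ a.e.\ in $\Omega$.

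The core of the argument is a Picone test. For $\varepsilon>0$ put $v_\varepsilon\coloneqq w_1^p/(u+\varepsilon)^{p-1}$, where $w_1$ is the positive first eigenfunction with $\|w_1\|_{\lp}=1$. Since $w_1\in L^\infty(\Omega)\cap\wwsp$ and $u+\varepsilon\ge\varepsilon>0$, the function $v_\varepsilon$ is bounded, nonnegative, vanishes on $\Omega^c$, and belongs to $\wwsp$ (a short verification of the seminorm, as in the Picone literature for the fractional $p$-Laplacian). Applying Lemma \ref{lem:laux1} with $a_1=w_1(x)$, $a_2=w_1(y)$, $b_1=u(x)+\varepsilon$, $b_2=u(y)+\varepsilon$ (so $b_1-b_2=u(x)-u(y)$), dividing by $|x-y|^{N+sp}$ and integrating over $\R^{2N}$, then using $\K|w_1|_{\scriptstyle\wspr}^p=\lambda_1(s,p)$ (test the eigenfunction equation with $w_1$) together with the super-solution inequality against $v_\varepsilon$, we get
\[
\lambda_1(s,p)\ \ge\ \K\int_{\R^{2N}}\frac{|u(x)-u(y)|^{p-2}(u(x)-u(y))(v_\varepsilon(x)-v_\varepsilon(y))}{|x-y|^{N+sp}}\,dxdy\ \ge\ \lambda\int_\Omega u^{p-1}v_\varepsilon\,dx+\langle f,v_\varepsilon\rangle.
\]
Dropping the nonnegative term $\langle f,v_\varepsilon\rangle$ and letting $\varepsilon\to0^+$ (dominated convergence: $u^{p-1}v_\varepsilon\le w_1^p\in L^1(\Omega)$ and $u^{p-1}v_\varepsilon\to w_1^p$ a.e., since $u>0$ a.e., so $\int_\Omega u^{p-1}v_\varepsilon\,dx\to\int_\Omega w_1^p\,dx=1$) yields $\lambda_1(s,p)\ge\lambda$. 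Combined with the hypothesis this forces $\lambda=\lambda_1(s,p)$, and then the displayed chain also gives $\langle f,v_\varepsilon\rangle\to0$ as $\varepsilon\to0^+$.

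It remains to exclude $\lambda=\lambda_1(s,p)$, which is the delicate point. With $\lambda=\lambda_1(s,p)$, the chain above shows that the nonnegative integrand
\[
g_\varepsilon(x,y)\coloneqq\frac{|w_1(x)-w_1(y)|^p-|u(x)-u(y)|^{p-2}(u(x)-u(y))(v_\varepsilon(x)-v_\varepsilon(y))}{|x-y|^{N+sp}}
\]
satisfies $\K\int_{\R^{2N}}g_\varepsilon\,dxdy\le\lambda_1(s,p)-\lambda_1(s,p)\int_\Omega u^{p-1}v_\varepsilon\,dx-\langle f,v_\varepsilon\rangle\to0$. Since $v_\varepsilon\to w_1^p/u^{p-1}$ a.e.\ in $\R^N$ (with the convention that this limit is $0$ on $\Omega^c$), Fatou's lemma forces pointwise equality in Lemma \ref{lem:laux1} for a.e.\ $(x,y)\in\R^{2N}$; hence there is a measurable $k(x,y)$ with $(w_1(x),w_1(y))=k(x,y)(u(x),u(y))$ a.e. The standard consequence — using $w_1,u>0$ a.e.\ in $\Omega$ (so $k=w_1/u$ is a.e.\ constant on $\Omega$ by Fubini), and that on $\Omega\times\Omega^c$ the equality together with $w_1>0$ forces $u=0$ a.e.\ on $\Omega^c$ — is that $u=k\,w_1$ a.e.\ in $\R^N$ for some constant $k>0$. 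Then $u$ is (a multiple of) the first eigenfunction, so $(-\Delta_p)^su=\lambda_1(s,p)u^{p-1}$ in the weak sense, and plugging this into the super-solution inequality gives $\langle f,v\rangle\le0$ for every $v\in\wwsp$, $v\ge0$, hence $f\equiv0$, contradicting $f\not\equiv0$. I expect the main obstacles to be the membership $v_\varepsilon\in\wwsp$ and, above all, making rigorous the passage from $\K\int g_\varepsilon\to0$ to pointwise Picone equality and then to $u\propto w_1$ (including the behaviour on $\Omega\times\Omega^c$); everything else is routine.
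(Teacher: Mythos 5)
Your proposal is correct and follows essentially the same route as the paper's proof: the same Picone test function $w_1^p/(u+\varepsilon)^{p-1}$ (the paper's $v_n$ with $\varepsilon=\nicefrac1n$), Lemma \ref{lem:laux1}, the super-solution inequality, and a Fatou/dominated-convergence limit to force Picone equality and $u=kw_1$, ending in a contradiction with $f\not\equiv0$. In fact you are somewhat more explicit than the paper on two points it glosses over — the exclusion of the case $u\equiv0$ via Lemma \ref{lema:mprinciple}, and the identification $u=kw_1$ on all of $\R^N$ (i.e.\ $u=0$ a.e.\ in $\Omega^c$ from the equality case on $\Omega\times\Omega^c$), which is what legitimizes plugging the eigenfunction identity into the super-solution inequality at the end.
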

	
	\begin{proof}
		Suppose, to the contrary, there is 
		a non-negative weak super-solution $u$
		of \eqref{eq:antimax3}. 
		Then, by Lemma \ref{lema:mprinciple}, 
		$u>0$ a.e. in $\Omega.$
		By the definition of $\hwsp,$ let $U\supset\supset\Omega$ be 
		such that
		\[
			\|u\|_{W^{s,p}(U)}+\int_{\mathbb{R}^N}
			\dfrac{|u|^{p-1}}{(1+|x|)^{N+sp}}dx<\infty,
		\]
		$n\in\N$ and $u_n\coloneqq u+\dfrac1{n}.$

		We begin by proving that 
		$v_{n}\coloneqq \dfrac{w_1^{p}}{u_n^{p-1}}\in\wwsp.$ 
		It is immediate that $v_n>0$ in $\Omega,$ 
		$v_n=0$ in $\Omega^c,$ and since 
		$w_1\in L^{\infty}(\Omega)$ we have that 
		$v_{n}\in L^{p}(\Omega).$  
 
		On the other hand
		\begin{align*}
			|v_{n}(x)-v_{n}(y)|=&\left|
			\dfrac{w_1(x)^{p}-w_1(y)^p}{u_n(x)^{p-1}}
			+\dfrac{w_1(y)^p\left(u_n(y)^{p-1}-u_n(x)^{p-1}\right)}
			{u_n(y)^{p-1}u_n(x)^{p-1}}\right|\\
			\le& n^{p-1}\left|w_1(x)^{p}-w_1(y)^p\right|
			+\|w_1\|_{L^{\infty}(\Omega)}^p
			\dfrac{\left|u_n(x)^{p-1}-u_n(y)^{p-1}\right|}
			{u_n(y)^{p-1}u_n(x)^{p-1}}\\
			\le& n^{p-1}p(w_1(x)^{p-1}+w_1(y)^{p-1})|w_1(x)-w_1(y)|\\
			&+\|w_1\|_{L^{\infty}(\Omega)}^p(p-1)
			\dfrac{|u_n(x)^{p-2}+u_n(y)^{p-2}|}
			{u_n(y)^{p-1}u_n(x)^{p-1}}|u_n(x)-u_n(y)|\\
			\le& 2\|w_1\|_{L^{\infty}(\Omega)}^{p-1}n^{p-1}p|w_1(x)-w_1(y)|\\
			&+n\|w_1\|_{L^{\infty}(\Omega)}^p(p-1)
			\left(\dfrac1{u_n(y)}+\dfrac1{u_n(x)}\right)|u(x)-u(y)|\\
	 		\le& C(n,p,\|w_1\|_{L^{\infty}(\Omega)})
			\left(|w_1(x)-w_1(y)|+|u(x)-u(y)|\right),
		\end{align*} 
		for all $(x,y)\in\R^N\times\R^N.$ Hence 
		$v_{n}\in W^{s,p}(U)$ for all $m\in\N$ due to $w_1,u\in W^{s,p}(U).$
		Then, since $v_n=0$ in $\Omega^c,$ and $v_n\in W^{s,p}(U)$
		with $\Omega\subset\subset U,$ we have
		\begin{align*}
			\int_{\R^{2N}}&\dfrac{|v_n(x)-v_n(y)|^p}{|x-y|^{N+sp}}dxdy
			=\\
			&\qquad=\int_{U^2}\!\!\!\dfrac{|v_n(x)-v_n(y)|^p}{|x-y|^{N+sp}}dxdy
			+2\int_{U\times U^c}\dfrac{|v_n(x)|^p}{|x-y|^{N+sp}}dxdy\\
			&\qquad=\int_{U^2}\dfrac{|v_n(x)-v_n(y)|^p}{|x-y|^{N+sp}}dxdy
			+2\int_{\Omega\times U^c}\dfrac{|v_n(x)|^p}{|x-y|^{N+sp}}dxdy\\
			&\qquad=\int_{U^2}\dfrac{|v_n(x)-v_n(y)|^p}{|x-y|^{N+sp}}dxdy
			+2n^p\|w_1\|_{L^\infty(\Omega)}
			\int_{\Omega\times U^c}\dfrac{dxdy}{|x-y|^{N+sp}}\\	
			&\qquad<\infty,
		\end{align*}
		that is $v_{n}\in W^{s,p}(\mathbb{R}^N).$
		Therefore, $v_n\in\wwsp.$
		
		Now, set
		\begin{align*}
			L(&w_1,u_n)\coloneqq\\
			& |w_1(x)-w_1(y)|^p-
			|u_n(x)-u_n(y)|^{p-2}(u_n(x)-u_n(y))
			\left(\dfrac{w_1(x)^p}{u_n(x)^{p-1}}-
			\dfrac{w_1(x)^p}{u_n(y)^{p-1}}
			\right)
		\end{align*}
		
		By Lemma \ref{lem:laux1}, we have 

		\begin{align*}
    		0\le&\K\int_{\Omega^{2}} 
    		\dfrac{L(w_1,u_n)(x,y)}{|x-y|^{N+sp}} dxdy\le
    		\K\int_{\R^{2N}} 
    		\dfrac{L(w_1,u_n)(x,y)}{|x-y|^{N+sp}} dxdy\\
    		\le& \K\int_{\R^{2N}}
    		\dfrac{|w_1(x)-w_1(y)|^p}{|x-y|^{N+sp}} dx dy\\
    		&\qquad-\K
    		\int_{\R^{2N}}
    		\dfrac{|u(x)-u(y)|^{p-2}(u(x)-u(y))}{|x-y|^{N+sp}}
			\left(v_n(x)-v_n(y)\right)dxdy\\
			\le& 
			\lambda_{1}(s,p)\int_{\Omega}w_1(x)^p\,dx-
			\lambda\int_{\Omega}u(x)^{p-1}v_n(x)\, dx
			-\langle f,v_n\rangle\\
			\le& 
			\lambda_{1}(s,p)\int_{\Omega}w_1(x)^p\,dx-
			\lambda\int_{\Omega}u(x)^{p-1}
			\dfrac{w_1(x)^p}{u_n(x)^{p-1}}\, dx
			-\langle f,
			\dfrac{w_1^p}{u_n^{p-1}}\rangle\\
			\le& 
			\lambda_{1}(s,p)\int_{\Omega}w_1(x)^p\,dx-
			\lambda\int_{\Omega}u(x)^{p-1}
			\dfrac{w_1(x)^p}{u_n(x)^{p-1}}\, dx ,
		\end{align*}
		due to $w_1$ is the positive eigenvalue  associated to 
		$\lambda_1(s,p),$
		$u\in\hwsp$ is a weak super-solution
		of \eqref{eq:antimax3} and $f\ge0.$
		
		Since $\lambda_1(s,p)\le \lambda,$ by the Fatou's lemma 
		and the dominated convergence theorem
		\[
    		\int_{\Omega^{2}} \dfrac{L(w_1,u)(x,y)}{|x-y|^{N+sp}}
    		\, dxdy= 0.
		\]
	 	Then, again by Lemma \ref{lem:laux1},  
	 	$L(w_1,u)(x,y)=0$ a.e. in $\Omega.$ 
	 	and $u=kw_1$ a.e. in $\Omega$
	 	for some constant $k>0.$ 
	 	Then 
	 	\begin{align*}
			\lambda_1(s,p)\int_{\Omega}u(x)^{p-1}\varphi(x)d&x=\\
			=&\K\int_{\R^2}\dfrac{|u(x)-u(y)|^{p-2}(u(x)-u(y))
			(\varphi(x)-\varphi(y)) }{|x-y|^{N+sp}}
			dxdy\\
			\ge& \lambda\int_{\Omega} u(x)^{p-1}\varphi(x)dx
			+\langle f,\varphi\rangle ,
		\end{align*}
		for any $\varphi \in\wwsp,$ $\varphi\ge0.$
	 	This is a contradiction since 
	 	$\lambda\ge\lambda_1(s,p)$
	 	and $f\ge0,$ $f\not\equiv0.$  
	\end{proof}
	
	\begin{re}\label{lema:noexiste2}
		Observe that, Lemma \ref{lema:noexiste1}
		implies that if $\lambda\ge \lambda_1(s,p),$ 
		and $f\in \dwsp$ is such 
		that $f\le0$ and $f\not\equiv0,$ then the problem
		\eqref{eq:antimax3}
		has no non-positive weak sub-solutions. 
	\end{re}
	
	\begin{co}\label{co:noexiste1}
		Let  $f\in \dwsp$ be such 
		that $f\ge0$ and $f\not\equiv0.$ 
		Then the problem \eqref{eq:antimax3}
		with $\lambda=\lambda_1(s,p)$ has no weak super-solutions. 	
	\end{co}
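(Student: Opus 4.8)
The plan is to derive Corollary~\ref{co:noexiste1} from Lemma~\ref{lema:noexiste1} together with the minimum principle in Lemma~\ref{lema:mprinciple}. Suppose, for contradiction, that $u\in\hwsp$ is a weak super-solution of \eqref{eq:antimax3} with $\lambda=\lambda_1(s,p)$. Since $f\ge0$ and $f\not\equiv0$, in particular $f\ge0$, so the right-hand side $\lambda_1(s,p)|u|^{p-2}u+f$ need not be sign-definite a priori; the first task is therefore to show that any such super-solution is automatically non-negative. To this end I would test the super-solution inequality against $v=u_-\in\wwsp$ (which lies in $\wwsp$ because $u\ge0$ in $\Omega^c$, using the Remark that $u_-\in\mathcal X$ whenever $u\in\mathcal X$), exactly as in the proof of Theorem~\ref{teo:max1}. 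This yields
\[
	\lambda_1(s,p)\int_\Omega |u_-|^p\,dx
	\ge \K\int_{\R^{2N}}\frac{|u_-(x)-u_-(y)|^p}{|x-y|^{N+sp}}\,dxdy
	-\langle f,u_-\rangle
	\ge \K\,|u_-|_{\wspr}^p,
\]
using $f\ge0$ to drop the term $\langle f,u_-\rangle\ge0$ with the correct sign, together with the standard pointwise inequality
\[
	|u(x)-u(y)|^{p-2}(u(x)-u(y))(u_-(x)-u_-(y))\le -|u_-(x)-u_-(y)|^p.
\]
If $u_-\not\equiv0$ this forces $\lambda_1(s,p)\ge \K\,|u_-|_{\wspr}^p/\|u_-\|_{\lp}^p\ge\lambda_1(s,p)$, and hence equality, so by the variational characterization of $\lambda_1(s,p)$ and Remark~\ref{re:referi} the function $u_-$ would be a first eigenfunction; but a first eigenfunction has constant sign and cannot vanish on a set of positive measure unless it is identically zero, contradicting $u_-\not\equiv0$ (alternatively, one may simply note that $u_-\not\equiv0$ together with $u\ge0$ on $\Omega^c$ makes $u_-$ a non-trivial minimizer that must be positive a.e.\ in $\Omega$, i.e.\ $u<0$ a.e.\ in $\Omega$; but then $u$ is a \emph{negative} super-solution, which one rules out using $f\ge 0$, $f\not\equiv 0$ by testing against any nonnegative $\varphi$). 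Either way we conclude $u_-\equiv0$, i.e.\ $u\ge0$ a.e.\ in $\R^N$.

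Once $u\ge0$, the super-solution $u$ is a \emph{non-negative} weak super-solution of \eqref{eq:antimax3} with $\lambda=\lambda_1(s,p)\ge\lambda_1(s,p)$ and $f\ge0$, $f\not\equiv0$, so Lemma~\ref{lema:noexiste1} applies directly and gives a contradiction. Hence no weak super-solution exists, which is the assertion of the Corollary.

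The main obstacle is the first step: showing that a super-solution at $\lambda=\lambda_1(s,p)$ with only $f\ge0$ must be non-negative, since the reasoning there must carefully separate the borderline equality case (where $u_-$ becomes a first eigenfunction). I expect this to be handled exactly as in Theorem~\ref{teo:max1}: the key point is that the test function $u_-$ is admissible and that $f\ge0$ lets us discard $\langle f,u_-\rangle$ with a favourable sign, after which the variational characterization of $\lambda_1(s,p)$ closes the argument; the alternative possibility $u<0$ a.e.\ in $\Omega$ is then excluded because a strictly negative super-solution cannot absorb a nontrivial $f\ge0$ when tested against nonnegative functions. The rest is a direct invocation of Lemma~\ref{lema:noexiste1}.
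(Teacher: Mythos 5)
Your opening move coincides with the paper's: test the super-solution inequality with $u_-\in\wwsp$, use the pointwise inequality and $\langle f,u_-\rangle\ge0$ to obtain $\lambda_1(s,p)\|u_-\|_{\lp}^p\ge\K|u_-|_{\wspr}^p$, and invoke Remark \ref{re:referi} to conclude that either $u_-\equiv0$ or $u_-$ is a first eigenfunction. (Minor slip: in your displayed chain the term $\langle f,u_-\rangle$ should appear with a plus sign and is then dropped because it is nonnegative; as written, the second inequality goes the wrong way.) Whether you call on Lemma \ref{lema:noexiste1} before or after this computation is immaterial. The genuine gap is in how you dispose of the eigenfunction alternative. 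Your primary argument --- that a first eigenfunction ``cannot vanish on a set of positive measure'', contradicting $u_-\not\equiv0$ --- is a non sequitur: if $u_-=k w_1$ with $k>0$, then $u_->0$ a.e.\ in $\Omega$, i.e.\ $u<0$ a.e.\ in $\Omega$, and nothing forces $u_-$ to vanish anywhere in $\Omega$. This is not a contradiction at this stage; it is exactly the scenario that still has to be excluded, and it is where the paper's proof does its remaining work.

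Your parenthetical fallback identifies the right configuration ($u<0$ a.e.\ in $\Omega$) but then asserts without proof that a negative super-solution is ``ruled out by testing against any nonnegative $\varphi$.'' That step is not a one-liner: a super-solution only satisfies $u\ge0$ in $\Omega^c$, so $u$ need not equal $-k w_1$ on all of $\R^N$, and the super-solution inequality does not directly collapse to $\lambda_1(s,p)\int_\Omega|u|^{p-2}u\varphi\,dx\ge\lambda_1(s,p)\int_\Omega|u|^{p-2}u\varphi\,dx+\langle f,\varphi\rangle$. The paper closes this by using that $u_-$ is an eigenfunction together with a pointwise monotonicity comparison (one has $u\ge-u_-$, the test function satisfies $v\ge0$ in $\Omega$ and $v=0$ in $\Omega^c$, and $t\mapsto|t|^{p-2}t$ is increasing) to show that the nonlinear form evaluated at $u$ against any nonnegative $v\in\wwsp$ is bounded above by $\lambda_1(s,p)\int_\Omega|u|^{p-2}uv\,dx$; combined with the super-solution inequality this yields $\langle f,v\rangle\le0$ for all $v\ge0$, contradicting $f\ge0$, $f\not\equiv0$. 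Until you supply an argument of this kind (or an equivalent one, e.g.\ noting that all inequalities in your chain must be equalities, hence $\langle f,u_-\rangle=0$ with $u_-=k w_1>0$ a.e., and then proving that this forces $f\equiv0$, which itself requires care because $f$ is merely a nonnegative element of $\dwsp$), the proof is incomplete.
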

	
	\begin{proof}
		We argue by contradiction. If there would exists  a
		weak super-solution $u$ of  \eqref{eq:antimax3}
		with $\lambda=\lambda_1(s,p).$ By Lemma \ref{lema:noexiste1}, 
		$u_{-}\not\equiv0$ in $\Omega.$ Since $u_{-}\in\wwsp$ we get,
		by the characterization of $\lambda_1(s,p)$ given in
		Theorem \ref{teo:autovalores}, 
		\begin{align*}
			&-\lambda_1(s,p)\int_{\Omega}u_{-}(x)^p dx \le
			\lambda_1(s,p)\int_{\Omega}|u(x)|^{p-2}u(x)u_{-}(x) dx
			+\langle f,u_{-}\rangle\\
			&\le\K\int_{\R^{2N}}
			\dfrac{|u(x)-u(y)|^{p-2}(u(x)-u(y)
			(u_{-}(x)-u_{-}(y))}{|x-y|^{N+sp}}
			dxdy\\
			&\le-\K\int_{\Omega_{-}^2}
			\dfrac{|u_{-}(x)-u_{-}(y)|^{p}}{|x-y|^{N+sp}}
			dxdy-2\K\int_{\Omega_{-}\times \Omega_{-}^c}
			\dfrac{(u_{-}(x)+u_{+}(y))^{p-1}u_{-}(x)}{|x-y|^{N+sp}}
			dxdy\\
			&\le-\K\int_{\R^{2N}}
			\dfrac{|u_{-}(x)-u_{-}(y)|^{p}}{|x-y|^{N+sp}}
			dxdy.
		\end{align*}
		Therefore
		\[
			\lambda_1(s,p)\ge\K
			\dfrac{\displaystyle\int_{\R^{2N}}
			\dfrac{|u_{-}(x)-u_{-}(y)|^{p}}{|x-y|^{N+sp}}
			dxdy}
			{\displaystyle\int_{\Omega}u_{-}(x)^p dx},
		\]
		that is $u_-$ is a corresponding eigenfunction to 
		$\lambda_1(s,p)$ (see Remark \ref{re:referi}). 
		Then there is $k>0$ such that 
		$u_-=kw_1,$ and therefore $u_{-}>0$ in $\Omega,$ 
		that is $u<0$ in $\Omega.$
		Moreover
		\begin{align*}
			\lambda_1(s,p)
			&\int_{\Omega}|u(x)|^{p-2}uvdx\\
			&=-\K\int_{\R^{2N}}
			\dfrac{|u_-(x)-u_-(y)|^{p-2}(u_-(x)-u_-(y))
			(v(x)-v(y)) }{|x-y|^{N+sp}}dxdy\\
			&\ge\K\int_{\R^{2N}}\dfrac{|u(x)-u(y)|^{p-2}(u(x)-u(y))
			(v(x)-v(y)) }{|x-y|^{N+sp}}dxdy\\
			&\ge \lambda_1(s,p)\int_{\Omega} |u(x)|^{p-2}uvdx
			+\langle f,v\rangle 
		\end{align*}
		for any $v \in\wwsp,$ $v\ge0.$
		This is a contradiction since $f\ge0,$ and $f\not\equiv0.$
	\end{proof}
	\begin{re}\label{re:noexiste2}
		Note that, it follows straightforward from Corollary  
		\ref{co:noexiste1} that if $f\in\dwsp$ is such 
		that $f\le0$ and $f\not\equiv0.$ Then the problem 
		\eqref{eq:antimax3}
		with $\lambda=\lambda_1(s,p)$ has no weak sub-solutions. 	
	\end{re}

	\begin{lem}\label{lema:medcotinf1}
		Let $\lambda\ge \lambda_1(s,p),$ and $f\in \dwsp$ be 
		such that $f\ge0$ and $f\not\equiv0.$ 
		Then there exist $\alpha>1$ and
		a constant $C>0$ such that for all $u$ is a weak 
		super-solution of 
		\eqref{eq:antimax3} we have that
		\[
			\left(\dfrac{C}{\lambda}
			\right)^{\alpha}
			\le |\Omega_{-}|,
		\]
		where $\Omega_-=\{x\in\Omega\colon u(x)<0\}.$
 
	\end{lem}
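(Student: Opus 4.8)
The plan is to test the variational inequality defining a weak super-solution of \eqref{eq:antimax3} against $v=u_{-}$, to estimate the resulting bilinear form from above by $-\K\,|u_{-}|_{\wspr}^{p}$ by means of an elementary pointwise inequality, and then to convert the Poincar\'e-type estimate so obtained into a lower bound for $|\Omega_{-}|$ using the fractional Sobolev embedding together with H\"older's inequality on $\Omega_{-}$. The resulting $C$ and $\alpha$ will depend only on $N$, $s$, $p$ and $\Omega$, as required.

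First I would note that, since $\lambda\ge\lambda_{1}(s,p)$, $f\ge0$ and $f\not\equiv0$, Lemma~\ref{lema:noexiste1} forbids $u$ from being a non-negative super-solution; hence $u_{-}\not\equiv0$ and thus $|u_{-}|_{\wspr}>0$. Moreover, since $u\ge0$ a.e.\ in $\Omega^{c}$, the function $u_{-}$ vanishes a.e.\ in $\Omega^{c}$ and coincides with the zero extension of its restriction to $\Omega$; arguing as in the verification that $v_{n}\in\wwsp$ in the proof of Lemma~\ref{lema:noexiste1} (using $u\in W^{s,p}(U)$ for some $U\supset\supset\Omega$ and $u_{-}\in L^{p}(\Omega)$) one checks $u_{-}\in\wwsp$, so $u_{-}$ is admissible as a test function. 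Testing the definition of weak super-solution of \eqref{eq:antimax3} with $v=u_{-}$, and using $|u|^{p-2}u\,u_{-}=-u_{-}^{p}$ a.e.\ in $\Omega$ together with $\langle f,u_{-}\rangle\ge0$, gives
\[
\K\int_{\R^{2N}}\frac{|u(x)-u(y)|^{p-2}(u(x)-u(y))(u_{-}(x)-u_{-}(y))}{|x-y|^{N+sp}}\,dxdy\ \ge\ -\lambda\int_{\Omega}u_{-}^{p}\,dx .
\]
Applying the elementary inequality $|a-b|^{p-2}(a-b)(\max\{a,0\}-\max\{b,0\})\ge|\max\{a,0\}-\max\{b,0\}|^{p}$ with $a=-u(x)$ and $b=-u(y)$ shows that $|u(x)-u(y)|^{p-2}(u(x)-u(y))(u_{-}(x)-u_{-}(y))\le-|u_{-}(x)-u_{-}(y)|^{p}$ for all $x,y\in\R^{N}$, so the left-hand side above is $\le-\K\,|u_{-}|_{\wspr}^{p}$, whence
\[
\K\,|u_{-}|_{\wspr}^{p}\ \le\ \lambda\int_{\Omega}u_{-}^{p}\,dx .
\]

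To finish, I would fix $q\in(p,p_{s}^{\star})$ and use that $|\cdot|_{\wspr}$ is a norm on $\wwsp$ equivalent to the standard one, so that the fractional Sobolev embedding yields a constant $C_{S}=C_{S}(N,s,p,q,\Omega)>0$ with $\|v\|_{L^{q}(\Omega)}\le C_{S}\,|v|_{\wspr}$ for every $v\in\wwsp$. Combining this with H\"older's inequality on $\Omega_{-}=\{x\in\Omega\colon u_{-}(x)>0\}$,
\[
\int_{\Omega}u_{-}^{p}\,dx=\int_{\Omega_{-}}u_{-}^{p}\,dx\ \le\ \|u_{-}\|_{L^{q}(\Omega)}^{p}\,|\Omega_{-}|^{1-\frac{p}{q}}\ \le\ C_{S}^{p}\,|u_{-}|_{\wspr}^{p}\,|\Omega_{-}|^{1-\frac{p}{q}} ,
\]
and inserting this into the previous display and dividing by $|u_{-}|_{\wspr}^{p}>0$ we obtain $\K\le\lambda\,C_{S}^{p}\,|\Omega_{-}|^{1-p/q}$, that is,
\[
|\Omega_{-}|\ \ge\ \left(\frac{\K}{\lambda\,C_{S}^{p}}\right)^{\frac{q}{q-p}} .
\]
Hence the statement holds with $C=\K\,C_{S}^{-p}$ and $\alpha=\frac{q}{q-p}>1$.

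The step requiring the most care is the pointwise inequality used to pass from the weak formulation to $\K\,|u_{-}|_{\wspr}^{p}\le\lambda\int_{\Omega}u_{-}^{p}\,dx$: it is elementary but must be verified according to the signs of $a$ and $b$ (equality when both are $\le0$, a strict gain when they have opposite signs), and one has to make sure all the integrals in that computation are finite, which is exactly where $u\in\hwsp$ and $u_{-}\in\wwsp$ are used. Everything else is a routine combination of the variational inequality with the Sobolev and H\"older inequalities; the hypotheses $\lambda\ge\lambda_{1}(s,p)$ and $f\not\equiv0$ enter only through Lemma~\ref{lema:noexiste1}, which is what guarantees $u_{-}\not\equiv0$ and hence $|u_{-}|_{\wspr}>0$.
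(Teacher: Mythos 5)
Your proof is correct and follows essentially the same route as the paper's: invoke Lemma \ref{lema:noexiste1} to get $u_{-}\not\equiv0$, test the super-solution inequality with $u_{-}$ and use the pointwise inequality to obtain $\K|u_{-}|_{\scriptstyle\wspr}^{p}\le\lambda\int_{\Omega}u_{-}^{p}\,dx$, then conclude via the fractional Sobolev embedding and H\"older's inequality on $\Omega_{-}$, yielding $\alpha=\nicefrac{q}{(q-p)}$. The only difference is that you spell out the elementary inequality and the admissibility of $u_{-}$ as a test function, which the paper leaves implicit.
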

	\begin{proof}
		Let $u$ be a weak super-solution of \eqref{eq:antimax3}. 
		By Lemma \ref{lema:noexiste1}, $u_{-}\not\equiv0$ in $\Omega.$
		Taking $u_{-}$ as test function, we have that
		\begin{align*}
			\K\int_{\R^{2N}}\dfrac{|u(x)-u(y)|^{p-2}(u(x)-u(y))
			(u_{-}(x)-u_{-}(y))}
			{|x-y|^{N+sp}}&dxdy\\
			&\ge-\lambda\int_{\Omega}|u_{-}|^p dx
			+\langle f, u_{-}\rangle.
		\end{align*}
		If $p<q<p_s^\star,$ by fractional 
		Sobolev embedding theorem, 
		then there is a constant $C$ such
		that
		\begin{align*}
			C\K&\|u_{-}\|_{\scriptstyle L^{q}(\Omega)}^p
			\le \K|u_{-}|_{\scriptstyle\wspr}^p\\
			&\le 
			-\K\int_{\R^{2N}}\dfrac{|u(x)-u(y)|^{p-2}
			(u(x)-u(y))(u_{-}(x)-u_{-}(y))}
			{|x-y|^{N+sp}}dxdy\\
			&\le -\K\int_{\R^{2N}}\dfrac{|u(x)-u(y)|^{p-2}(u(x)-u(y))
			(u_{-}(x)-u_{-}(y))}
			{|x-y|^{N+sp}}dxdy +\langle f, u_{-}\rangle\\
			&\le\lambda\int_{\Omega}|u_{-}|^p dx,
		\end{align*} 
		and using the H\"older inequality
		\[
			C\K\|u_{-}\|_{\scriptstyle L^{q}(\Omega)}^p\le \lambda
			\|u_{-}\|_{\scriptstyle L^{q}(\Omega)}^p
			|\Omega_{-}|^{1-\nicefrac{p}{{q}}} ,
		\]
		which, by using that $u_-\not \equiv 0$ in $\Omega,$  
		concludes the proof.
	\end{proof}
	
	\begin{re}\label{re:medcotinf2}
		As an immediate consequence of Lemma \ref{lema:medcotinf1}, 
		we have that if 
		$\lambda\ge \lambda_1(s,p),$ and $f\in \dwsp$ is such 
		that $f\le0$ and $f\not\equiv0,$ then there exist $\alpha>1$ and
		a constant $C>0$ such that for all $u$ is a weak sub-solution of 
		\eqref{eq:antimax3} we have that
		\[
			\left(\dfrac{C}{\lambda}
			\right)^{\alpha}
			\le |\Omega_{+}|,
		\]
		where $\Omega_+=\{x\in\Omega\colon u(x)>0\}.$
	\end{re}

	Next, we prove our first anti-maximum principle.

	\begin{proof}[Proof of Theorem \ref{teo:anti1}]
		Again, we only prove the first statement; 
		as before the another 
		statement can be proved in an analogous way.
		
		\medskip
		
		Suppose, to the contrary, there are sequences 
		$\{\lambda_n\}_{n\in\N}$ and $\{u_n\}_{n\in\N}$ such that
		$\lambda_n\searrow \lambda_1(s,p)$ 
		and $u_n$ is a weak solution
		of \eqref{eq:antimax3} with $\lambda=\lambda_n$ 
		and $(u_n)_+\not\equiv 0$ for all $n\in\N.$ 
		
		We claim that
		\begin{equation}\label{eq:lqinf}
			\|u_n\|_{\scriptstyle L^{q}(\Omega)}\to\infty
		\end{equation}
		for all $p\le q< p_s^\star.$
		
		Suppose not, that is there is 
		$q\in(p, p_s^\star)$ 
		such that $\{u_n\}_{n\in\N}$ is bounded in 
		$L^{q}(\Omega).$ Then, using that 
		$u_n$ is a weak solution of \eqref{eq:antimax3} for all 
		$n\in\N,$ H\"older's 
		inequality and $\lambda_n\searrow \lambda_1(s,p)$,
		we have that $\{u_n\}_{n\in\N}$ is bounded in $\wwsp.$
		Then, since $T_1$ is a completely continuous operator 
		(see Subsection \ref{subsecDP}), 
		up to a subsequence (still denoted by $u_n$)
		\[
			u_{n}\to u\quad\text{strongly in } \wwsp,\\
		\] 
		where $u$ is a weak solution of 
		\eqref{eq:antimax3} with $\lambda=\lambda_1(s,p).$
		By Corollary \ref{co:noexiste1}, 
		this is a contradiction. We have
		prove our claim.
		
		Set $q\in(p, p_s^\star)$ and
		\[
			v_n\coloneqq\dfrac{u_n}
			{\|u_n\|_{\scriptstyle L^{q}(\Omega)}}\quad 
			\forall n\in\N.
		\]
		Then for all $n\in\N$ $v_n$ is a weak solution of
		\[
			\begin{cases}
				(-\Delta_p)^s u=\lambda_n |u|^{p-2}u + 
				\dfrac{f(x)}{\|u_n\|_{\scriptstyle 
				L^{q}(\Omega)}^{p-1}}  
				&\text{in }\Omega,\\
				u=0 &\text{in }\Omega^c.
			\end{cases}
		\]
		
		Now, using again that $T_1$ is a completely 
		continuous operator 
		and the fractional Sobolev compact embedding theorem, 
		up to a subsequence (still denoted by $v_n$)
		\begin{align*}
			v_{n}\to v&\quad\mbox{strongly in } \wwsp,\\
			v_{n}\to v &\quad\mbox{strongly in } L^{q}(\Omega).
		\end{align*} 
		Thus, $v\not\equiv0$ in $\Omega$ and, 
		$v$ is a weak solution of 
		\eqref{ec:eigenvaluenolocal}
		since $\lambda_{n}\to\lambda_1(s,p)$ and 
		$\nicefrac{f}{\|u_n\|_{L^{q}(\Omega)}^{p-1}}\to0$ 
		strongly in $\dwsp.$  
		That is $v$ is an eigenfunction of $(-\Delta_p)^s$ associated to
		$\lambda_1(s,p).$
		Therefore either $v>0$ in $\Omega$ or  
		$v<0$ in $\Omega.$ 
		The case $v>0$ is a contradiction 
		by Lemma \ref{lema:medcotinf1}. To complete the proof of the 
		theorem it remains to consider the case when $v<0.$
		
		If $v<0$ then $(v_{n})_+\to0$ strongly in $L^q(\Omega).$ 
		Therefore, 
		using \eqref{eq:lqinf}, it turns out that 
		$\|(u_{n})_+\|_{L^q(\Omega)}\to\infty.$
		
		On the other hand, by the Sobolev embedding theorem, 
		there is a constant $C$ independent of $n$ such that
		\begin{align*}
			C\K&\|(u_n)_+\|_{L^q(\Omega)}^p\le
				\K|(u_n)_+|_{W^{s,p}(\R^N)}^p\\
				&\le \K\int_{\R^{2N}}
				\dfrac{|u_n(x)-u_n(y)|^{p-2}(u_n(x)-u_n(y))
				((u_n)_+(x)-(u_n)_+(y))}{|x-y|^{N+sp}} dxdy\\
				&\le \lambda_n
				\int_{\Omega}(u_n)_+^p dx+\langle
				f(x),(u_n)_+\rangle\\
				&\le \lambda_n\|(u_n)_+\|_{\scriptstyle L^{q}(\Omega)}^p
				|\{x\in\Omega\colon u_n(x)>0\}|^{1-\nicefrac{p}{q}}
				+\|f\|_{\scriptstyle W^{-s,p\prime}(\Omega)}
				|(u_n)_+|_{\scriptstyle\wspr}
		\end{align*} 
		for all $n\in\N.$ Then
		\[
			C\le \lambda_n 
			|\{x\in\Omega\colon 
			u_n(x)>0\}|^{1-\nicefrac{p}{q}}			
			+\dfrac{\|f\|_{\scriptstyle W^{-s,p\prime}(\Omega)}}
			{\|(u_n)_+\|_{\scriptstyle L^{q}(\Omega)}^{p-1}}
			|v_n|_{\scriptstyle\wspr},
		\]
		for all $n\in\mathbb{N}.$ Therefore
		\[
			\dfrac{C}{\lambda_1(s,p)}\le 
			\liminf_{n\to\infty}|\{x\in\Omega
			\colon u_n(x)>0\}|^{1-\nicefrac{p}{q}} ,
		\]
		which is a contradiction with the fact that $(v_{n})_+\to0$ 
		strongly in $L^q(\Omega).$		
	\end{proof}
	
	Finally, We show our anti-maximum principle for the linear case. 
	
	\begin{proof}[Proof of Theorem \ref{teo:anti2}]
		As before, we only prove the first statement; 
		the other statements can be proved in an analogous way.
		
		\medskip
		
		It is suffices to prove that, for any two sequences
		$\{\lambda_n\}_{n\in\N}$ and $\{u_n\}_{n\in\N}$
		such that $\lambda_n\searrow \lambda_1(s,2)$ 
		and $u_n$ is a weak solution
		of \eqref{eq:antimax3} with $\lambda=\lambda_n,$
		there is $n_0\in\mathbb{N}$ such that $u_n<0$ in 
		$\Omega$ for all $n\ge n_0.$ For such sequences, by Lemma 
		\ref{lema:Linfinito}, $u_{n}\in L^{\infty}(\Omega)$ for all
		$n\in\mathbb{N}.$
		
		\medskip
		
		We claim that 
		\[
			\|u_n\|_{\scriptstyle L^\infty(\Omega)}\to \infty.
		\]
		
		Suppose not, that is $\{u\}_{n\in\mathbb{N}}$ is bounded in 
		$L^{\infty}(\Omega).$ 
		Then, using that $u_n$ is a weak solution of 
		\eqref{eq:antimax3} for all $n\in\N,$ H\"older's 
		inequality and $\lambda_n\searrow \lambda_1(s,p)$,
		we have that $\{u_n\}_{n\in\N}$ is bounded in 
		$\widetilde{W}^{s,2}(\Omega).$		
		Then, since $T_1$ is a completely continuous operator, 
		up to a subsequence (still denoted by $u_n$)
		\[
			u_{n}\to u\quad\text{strongly in } 
			\widetilde{W}^{s,2}(\Omega),
		\] 
		where $u$ is a weak solution of 
		\eqref{eq:antimax3} with $\lambda=\lambda_1(s,2).$ Then
		\begin{align*}
			\lambda_1(s,2)\int_{\Omega}uw_1 dx&=
			\K\int_{\mathbb{R}^{2k}}
			\dfrac{(u(x)-u(y))(w_1(x)-w_1(y))}{|x-y|^{N+2s}}dxdy\\
			&=\lambda_1(s,2)\int_{\Omega}uw_1 dx+
			\int_{\Omega}fw_1 dx.
		\end{align*}
		Therefore
		\[
			\int_{\Omega}fw_1 dx=0,
		\]
		and we have a contradiction. Thus our claim is proved.
		
		Set 
		\[
			v_n\coloneqq\dfrac{u_n}
			{\|u_n\|_{\scriptstyle L^{\infty}(\Omega)}}\quad 
			\forall n\in\N.
		\]
		Then for all $n\in\N$ $v_n$ is a weak solution of
		\[
			\begin{cases}
				(-\Delta_p)^s u=\lambda_n |u|^{p-2}u + 
				\dfrac{f(x)}{\|u_n\|_{\scriptstyle 
				L^{\infty}(\Omega)}}  
				&\text{in }\Omega,\\
				u=0 &\text{in }\Omega^c.
			\end{cases}
		\]
		
		Now, using again that $T_1$ is a completely 
		continuous operator 
		and the fractional Sobolev compact embedding theorem, 
		up to a subsequence (still denoted by $v_n$)
		\[
			v_{n}\to v\quad\mbox{strongly in } 
			\widetilde{W}^{s,2}(\Omega).
		\] 
		Thus, $v\not\equiv0$ in $\Omega$ and, 
		$v$ is a weak solution of 
		\eqref{ec:eigenvaluenolocal}
		since $\lambda_{n}\to\lambda_1(s,2)$ and 
		$\nicefrac{f}{\|u_n\|_{L^{\infty}(\Omega)}}\to0$ 
		strongly in $\Omega.$  That is $v$ is an eigenfunction of
		$(-\Delta)^s$ associated to $\lambda_1(s,2).$
		Therefore either $v>0$ in $\Omega$ or  
		$v<0$ in $\Omega.$
		
		On the other hand, for any $n\in\mathbb{N}$
		\[
			(\lambda_1(s,2)-\lambda_n)\int_{\Omega}w_1v_n dx=
			\dfrac1{\|u\|_{\scriptstyle L^{\infty}(\Omega)}}
			\int_{\Omega}f(x)w_1 dx>0
		\]
		then, since $\lambda_1(s,2)<\lambda_n$ for any $n\in\mathbb{N},$
		we have that
		\[
			\int_{\Omega}w_1v_n dx<0\quad\forall n\in\mathbb{N}
		\]
		Therefore $v<0$ in $\Omega.$

		In addition, by Theorem \ref{the:reg} and 
		the Arzela--Ascoli theorem,
		up to a subsequence (still denoted by $v_n$)
		\[
				v_{n}\to w_1\quad\text{ and }\quad
				\dfrac{v_{n}}{\delta^s}\to \dfrac{w_1}{\delta^s}
		\]
		strongly in $\overline{\Omega}.$ Here 
		$\delta(x)=\mbox{dist}(x,\partial\Omega).$
		Then, by Lemma \ref{lema:cono}, 
		there is $n_0\in\mathbb{N}$ such that $v_n<0$ for all
		$n\in\mathbb{N}.$ That is
		there is $n_0\in\mathbb{N}$ such that $u_n<0$ for all
		$n\ge n_0.$		 
	\end{proof}

%%%%%%%%%%%%%%%%%%%%%%%%%%%%%%%%%%%%%%%%%%%%%%%%%%%%%%%%%%%%%
\subsection*{Aknowledgements}
	L. M. Del Pezzo was partially supported by  
	CONICET PIP 5478/1438  (Argentina) and A. Quaas was partially supported 
	by Fondecyt grant No. 1110210, Millennium Nucleus Center for Analysis of PDE NC130017 and Basal CMM UChile. 
%%%%%%%%%%%%%%%%%%%%%%%%%%%%%%%%%%%%%%%%%%%%%%%%%%%%%%%%%%%%%%
\bibliographystyle{aabbrv}
\def\cprime{$'$}
\providecommand{\bysame}{\leavevmode\hbox to3em{\hrulefill}\thinspace}
\providecommand{\MR}{\relax\ifhmode\unskip\space\fi MR }
% \MRhref is called by the amsart/book/proc definition of \MR.
\providecommand{\MRhref}[2]{%
  \href{http://www.ams.org/mathscinet-getitem?mr=#1}{#2}
}
\providecommand{\href}[2]{#2}

\end{document}